\def\A{{\mathbb A}} 
\def\C{{\mathbb C}} 
\def\P{{\mathbb P}} 
\def\Q{{\mathbb Q}} 
\def\R{{\mathbb R}}
\def\Z{{\mathbb Z}}
\def\CC{{\mathbb C}}
\def\Qcal{{\mathcal Q}}
\newcommand{\PPP}{\mathbb P}
\newcommand{\Ocal}{\mathcal O}
\newcommand{\Hcal}{\mathcal H}
\newcommand{\Jac}{\operatorname{Jac}}
\newcommand{\al}{\alpha}
\newcommand{\be}{\beta}
\newcommand{\ga}{\gamma}
\newcommand{\la}{\lambda}
\newcommand{\age}{\operatorname{age}}
\def\pmmu{{\pmb \mu}}
\def\CR{\operatorname{CR}}
\def\CY{\operatorname{CY}}
\def\LG{\operatorname{LG}}
\def\ol{\overline}
\def\wt{\widetilde}
\newtheorem{thm}{Theorem}[section]
\newtheorem{prop}[thm]{Proposition}
\newtheorem{lemma}[thm]{Lemma}
\newtheorem{cor}[thm]{Corollary}
\theoremstyle{remark} 
\newtheorem{rmk}[thm]{Remark}
\newtheorem{defn}[thm]{Definition}
\def\age{\mathop{\rm age}\nolimits}
\def\exp{\mathop{\rm exp}\nolimits}
\def\wbold{{\pmb w}}
\def\dbold{{\pmb d}}
\def\xbold{{\pmb x}}
\def\Wbar{{\overline W}}
\newcommand\eqnrf[1]{(\ref{#1})}
\begin{document}
\title{The Hybrid Landau--Ginzburg Models of Calabi--Yau Complete Intersections}
\author[A.~Chiodo]{Alessandro Chiodo}
\address{IMJ-PRG 
UMR7586, CNRS, UPMC, case 247, 4 Place Jussieu, 75252 Paris cedex 5, France}
\email{\tt{Alessandro.Chiodo@imj-prg.fr}}

\author[J.~Nagel]{Jan Nagel}
\address{IMB UMR5584, CNRS, Univ. Bourgogne Franche-Comt\'e, 21000 Dijon, France}
\email{\tt{Johannes.Nagel@u-bourgogne.fr}}

\begin{abstract}
We observe that the  state space 
of Landau--Ginzburg isolated singularities is simply a special case of 
Chen--Ruan orbifold cohomology relative to the generic fibre of the potential. 
This leads to the definition of 
the cohomology of hybrid Landau--Ginzburg models and its
identification 
via an explicit isomorphism to 
the cohomology of 
Calabi--Yau complete intersections inside weighted 
projective spaces. The combinatorial method used in the case of hypersurfaces 
proven by the first named author in collaboration with Ruan is streamlined 
and generalised after an orbifold
version of the 
Thom isomorphism and of the Tate twist.
\end{abstract}
\maketitle

\section{Introduction}
Landau--Ginzburg models play a central role in mirror symmetry. 
The simplest examples 
are the following isolated singularities: weighted-homogeneous polynomials
$W\colon \CC^n\to \CC$ with smooth fibres outside the origin and a single
isolated singularity in the special fibre over the origin. 
They played a key role in the early development of quantum cohomology, giving some of the first
examples of Frobenius manifolds and allowing to verify mirror symmetry 
statements. 
They have recently acquired a new status with 
the definition of their quantum invariants by Fan, Jarvis and Ruan \cite{FJRW} (FJRW theory) 
and later by Polishchuk and Vaintrob \cite{PV} 
(cohomological 
field theory of matrix factorisations). 
These advances make it possible to obtain equivalences between 
Landau--Ginzburg models and Calabi--Yau models without necessarily 
passing 
through mirror symmetry \cite{CIR}.
It is now also possible 
to state mirror symmetry between Landau--Ginzburg models both at the level
of  cohomological 
invariants and at the level of quantum cohomology invariants (see \cite{FJRW, Krawitz, BorisovBH}). There are many consequences: for instance 
the absence of any  
Calabi--Yau conditions in mirror statements can be effectively 
used (see for instance  \cite{FJRW, Krawitz});  
furthermore, the computational power appears higher:
We refer to approaches beyond concavity and 
in higher genus, see 
 \cite{FJRW, Gue, HLSW, CRZ}).

\medskip 

This paper starts from the definition by Fan, Jarvis and Ruan  \cite{FJRW}
of the state space
$\Hcal(W)$ of the Landau--Ginzburg model $W\colon [\CC^n/\pmmu_d]\to \CC$,
where $W$ is a potential with isolated singularity and $\pmmu_d$ is the 
corresponding monodromy action
$$\Hcal(W):=\bigoplus\nolimits_{g\in \pmmu_d} \mathcal (\Qcal_{W_g})^{\pmmu_d}.$$
Here $\Qcal_f$ stands for the Jacobian ring, or chiral algebra, attached to a potential $f$ 
\begin{equation}\label{eq:chiral}
\mathcal Q_f= dx_1\wedge\dots\wedge dx_n\otimes 
\Jac(f)=dx_1\wedge\dots\wedge dx_n\otimes\CC[x_1,\dots,x_n]/(\partial_1 f,\dots, \partial_n f),\end{equation}
and $W_g$ is the restriction of $W$ to the $g$-fixed space $\CC^{n_g}$ 
in $\CC^n$. 
As pointed out by the authors of \cite{FJRW}, the crucial ingredient $\Qcal$
and the orbifolding procedure 
appear already in the definition of the states of Landau--Ginzburg models
in Intriligator--Vafa \cite{IV} and in 
Kaufmann's work \cite{Kauf1},\cite{Kauf2},\cite{Kauf3}
in the definition of $B$-model invariants.
This state space is  bi-graded by applying to each term 
$(Q_{W_g})^{\pmmu_d}$ an
explicit formula: the somewhat \emph{ad hoc} bigrading
\begin{equation}\label{fakeTate}(n_g-\deg(f),\deg(f))+(\age(g),\age(g))-\left(\textstyle{\sum\nolimits_i \frac{w_i}d,\sum\nolimits_i \frac{w_i}d}\right),\end{equation}
 where $\age$ is the additive morphism 
$R\pmb \mu_r\to \Q$ mapping $k\in (\pmmu_r)^\vee=\Z/r$ with $0\le k<r$ to $k/r$.\\

\noindent{\emph{The cohomological LG/CY correspondence for CICY.}
In this paper we recognise 
that $\Hcal(W)$ is nothing but an occurrence 
of relative orbifold Chen--Ruan cohomology: for a Calabi--Yau hypersurface
$X_W=V(W)$ we consider the cohomological Landau--Ginzburg model
$$\Hcal^{p,q}(W)=H_{\CR}^{p+1,q+1}([\CC^n/\pmmu_d],W^{-1}(t_0);\CC)$$
for any $t_0\in \CC^\times$.
Note that $[\CC^n/\pmmu_d]$ is $\Ocal(-\pmb w):=\Ocal(-w_1)\oplus\dots\oplus \Ocal(-w_r)$ on 
the zero-dimensional weighted projective space $\PPP(d)$.
Then, when $W_1,\dots,W_r$ define a 
smooth complete intersection inside $\PPP(\pmb w)$, we set 
\begin{equation}
\label{hybrid LG statespace}
\Hcal^{p,q}(W_1,\dots,W_n):=H_{\CR}^{p+r,q+r}(\Ocal_{\PPP(\pmb d)}(-\pmb w); {\overline{W}}^{-1}(t_0);\CC), 
\end{equation}
where $\overline{W}$ may be regarded as a $\CC$-valued function, 
via the Cayley trick $\overline{W}=\sum_{i=1}^r p_iW_i(\pmb x)$ (see for instance 
\cite[5.4]{WiLGCY}).
In physics literature,  it is common to 
refer to the coordinates of $\PPP(\pmb d)$ as massive or 
ground coordinates and 
to the coordinates along the fibres of $\Ocal(-\pmb w)$ as 
massless coordinates, and to refer to $\CC$-valued potentials 
such as $\overline{W}$ effectively involving both types of variables as  
\emph{hybrid} (see, for instance, \cite[5.4]{WiLGCY} and \cite{HHP}). 

Finally, by a variation of stability condition one gets the 
counterpart of this hybrid model: 
the morphism 
$\overline{W}\colon 
\Ocal_{\PPP(\pmb w)}(-\pmb d)\to \CC$ which, 
after the same relative cohomology computation as above,
turns out to be isomorphic to the cohomology of the complete
Calabi--Yau intersection $V(W_1,\dots,W_r)$ inside $\PPP(\pmb w)$
 (see Prop.~\ref{prop:Tate} with $G=1$). The two LG and CY models are related by
 the following theorem.
We refer to Theorem \ref{thm} and Corollary 
\ref{cor} for the complete statement involving group quotients of complete intersections.

 \bigskip
 \noindent\textbf{Theorem (cohomological LG/CY correspondence for CICY).}
 \emph{For any smooth 
 complete intersection $X_{\overline{W}}=
 V(W_1,W_2,\dots,W_r)$ of Calabi--Yau type 
 inside the weighted projective space $\P(\wbold)$ we have 
 }
 $$\Hcal^{p,q}(W_1,\dots,W_r)\cong H^{p,q}_{\CR}(X_{W};\Q).$$
 
 \smallskip

\begin{rmk}
A related result by Libgober \cite{Libgober}  
proves
the invariance of the elliptic genus in a more general setup which includes complete intersections and several other GIT quotients 
(we recall that, after specialisation, 
the elliptic genus gives a combination of Hodge numbers, see 
\emph{e.g.} \cite{BL0}).
It is also worth-while pointing out how, in 
Libgober's treatment, Calabi--Yau models, Landau--Ginzburg models and their hybrid versions 
are precisely characterised in geometric terms, see \cite[Defn.~2.3]{Libgober}. 
\end{rmk}
  
Here, our main focus is how the Landau--Ginzburg/Calabi--Yau correspondence follows 
from several properties 
of Chen--Ruan cohomology of independent interest. We illustrate it in the rest of the 
introduction.
\\

\noindent {\emph{Thom isomorphism in Chen--Ruan cohomology and the Tate twist.}}
 The LG/CY statement, and the more general 
  Thm.~\ref{thm} including group actions, 
 relies on a version 
 of the Thom isomorphism  in ordinary cohomology
 \begin{equation}\label{eq:Thom}
 H^*(X;\CC)=H^*(\P(\wbold),\P(\wbold)\setminus X; \CC)\ (r),\end{equation} 
 where, by the Tate twist ``$(r)$'', the isomorphism preserves the
 bi-degree.
Notice that, due to its age-shifted degree, 
Chen--Ruan orbifold cohomology does not satisfy \eqref{eq:Thom} in 
the above form. However, by replacing $\P(\wbold)$ 
with the total space of the 
vector bundle $\Ocal(-d_1)\oplus \dots\oplus \Ocal(-d_r)$
we get a Thom isomorphism statement (see Prop.~\ref{prop:Tate} for the general statement).

\medskip 

\noindent \textbf {Proposition (Thom isomorphism in orbifold cohomology).} 
\emph{We have a canonical isomorphism 
$$H_{\CR}^{p-r,q-r}\left(X_W\right)\cong 
H^{p,q}_{\CR}\left(\Ocal_{\wbold}(-\dbold),\overline{W}^{-1}(t_0)\right).$$}
\smallskip

This is interesting in its own right
and explains, the \emph{ad hoc} bi-degree shift \eqref{fakeTate} in a geometric 
way: in Calabi--Yau cases it is merely the Tate twist in the Thom isomorphism.\footnote{This interpretation of the overall shift-by-total-charge $\textstyle{\sum\nolimits_i \frac{w_i}d}$ in
\eqref{fakeTate}, 
may lead to the correct interpretation of the age shift too;
it may be worth-while to point out here Denef--Loeser's \cite{DL} 
view on the age grading as a representation-theoretic analogue of the weight in Hodge theory, 
see Miles Reid \cite[\S2]{Reid}.}

In the same spirit, let us point out how the present description of 
the state space clarifies in classical geometric terms the dichotomy between 
narrow and broad 
sectors, which, following \cite{FJRW}, depends 
on the condition that the space of $g$-fixed points is 
or is not reduced to the origin. This dichotomy does not generalise as such 
in the complete intersection case. However, in our relative cohomology picture
broad states are simply those that lie 
in the kernel of the natural morphism $H^*_{\CR}(\Ocal(-\pmb w),\overline{W}^{-1}(t_0))\to 
H^*_{\CR}(\Ocal(-\pmb w))$. In this way ``broad'' is nothing but a 
generalisation of the classical notion of primitive.

 We also feel that 
the generalisation of Gysin sequences and Thom isomorphisms in Chen--Ruan 
orbifold cohomology is worth pursuing further 
in view of a well-behaved setup for an orbifold cohomology theory. \\

\noindent {\emph{An algebraic model for  the quantum theory of hybrid models.}}
We finally notice how, using the Thom isomorphism, 
the theorem stated above follows from a bi-degree preserving 
isomorphism 
$$H^*_{\CR}(\Ocal_{\P(\wbold)}(-\pmb d);\CC)\cong 
H^*_{\CR}(\Ocal_{\P(\dbold)}(-\pmb w);\CC).$$
This isomorphism 
 holds by $K$-equivalence but can be expressed directly via a
combinatorial argument which, with respect to \cite{ChiR}, is 
greatly simplified.

 By relying on this identification and on the classical descriptions of 
 the primitive cohomology of complete intersections \cite{Dimca, Na},
 we derive 
 an algebraic model of the cohomology of Calabi--Yau complete 
 intersection in terms of 
 the Jacobian 
 ring; see Theorem \ref{thm:FJRW}. This type of algebraic presentation of 
 cohomology classes has been proven crucial on many occasions in the formulation and proof of 
 mirror symmetry statements.
  
 The cohomology groups $\Hcal(W_1,\dots,W_n)$ 
 developed in this paper, have already been used 
 to provide a quantum
 correspondence between Calabi--Yau complete intersections 
 inside projective spaces 
 and hybrid Landau--Ginzburg models, see Clader \cite{Clader}. Recently, Fan, Jarvis and Ruan 
 provided a generalisation of their theory based on the 
 state space $\Hcal(W_1,\dots, W_n)$, see \cite{FJRbis}.
 Clader establishes a  correspondence between the quantum D-modules attached to 
 complete intersection of CY type within $\PPP(\CC^n)$ and the corresponding hybrid LG models. 
 Her correspondence depends on the choice of an analytic
  continuation
 which should mirror the parallel transport 
along the Gau\ss--Manin connection on the B-side. 
In this way, all
isomorphisms arising at the cohomological level from these quantum
correspondences
should depend on the path chosen for the parallel transport. 
In this perspective it is interesting how our proof of Theorem \ref{thm} 
provides a combinatorial method for writing an isomorphism 
explicitly. 
Therefore, our isomorphism could provide a useful tool to 
study the monodromy operators in the future. \\

\noindent {\emph{Structure of the paper.}} We recall the setup in \S2, develop the 
Calabi--Yau side in \S3, the Landau--Ginzburg side in \S4, and their correspondence in \S5.\\

\noindent {\emph{Aknowledgements.}} The first named author is grateful to the IMB, Dijon, for
the hospitality during the finalisation of this work. We are extremely 
grateful to Anatoly Libgober for his 
comments and suggestions.

\section{Setup}
We consider a complete intersection of $r<n$ hypersurfaces in the weighted projective space $\P(\wbold):=\P(w_1,\ldots,w_n)$.
Let $W_1,\ldots,W_r$ be quasi-homogeneous polynomials of degree
$d_1,\ldots,d_r$ in the variables $x_1,\dots, x_n$ 
of coprime weights $w_1,\dots,w_n$. 
We assume that $X_W$ is a non-degenerate complete intersection,
\emph{i.e.}, 
\begin{enumerate}
\item[(i)] the  choice of weights $w_1,\dots, w_n$ is unique;
\item[(ii)] $\{W_1=0,\ldots,W_r=0\}\subset\C^n$ is smooth outside
the origin.
\end{enumerate}
Assumption (ii) holds if and only if the Jacobian matrix
 $J = ({\partial_j W_i}(p))_{i,j}$ has rank $r$ for any 
$p\in\{\xbold \in \C^n\mid W_1=\dots=W_r=0\}\setminus\{\pmb 0\}$. 
Then, inside the Deligne--Mumford stack $\P(\wbold)$, we 
get a smooth complete intersection
$$
X_W = \{W_1=\ldots = W_r = 0\}\subset\P(\wbold).
$$

We further assume that $X_W$ is of Calabi--Yau type in the following
sense: we impose that the canonical sheaf is trivial, in other words
\begin{enumerate}
\item[(iii)] $\sum_{i=1}^r d_i = \sum_{j=1}^n w_j$.
\end{enumerate}

We consider a group 
$\Gamma\subset (\C^\times)^n$ of diagonal 
matrices $\ga$ whose diagonal 
entries are of the form $(\ga_1,\dots, \ga_n)
=\al \ol \la:=(\al_1\lambda^{w_1},\dots,\al_n\la^{w_n})$
with 
$W_i(\al_1x_1,\dots,\al_nx_n)=W_i(x_1,\dots,x_n),$ 
\text{all $i$} and $\la\in \C^\times$. We assume that 
$\Gamma$ contains the torus $\{\ol \la=(\la^{w_1},\dots,\la^{w_n})\mid \la \in \C^\times\}$; 
this is automatically its identity component $\Gamma^0$, and we write $G$ for 
the group of connected components $\Gamma/\Gamma^0$.
By (i), $G$ is finite (see for instance \cite[Lem.~2.18]{FJRW}). 
The quotient stack 
$[U/\Gamma]$, with $U=\{W_1 = W_2=\dots=W_r\}_{\C^n} \setminus \{\pmb 0\}$, 
is 
a smooth Deligne--Mumford stack and,
following Romagny's treatment \cite[Rem.~2.4]{Ro} 
of actions on stacks, it is canonically equivalent to the $2$-stack 
$$[X_W/G]=[U/\Gamma].$$
We introduce for all $\ga\in \Gamma$ the 
following notation
\begin{align*}
\C^n_\ga:=\{\xbold\in \C^n\mid \ga\cdot \xbold=\xbold\},\qquad 
n_\ga:=\dim\C^n_\ga,\qquad
W_{i,\ga}:=W_i\!\mid_{\C^n_\ga}.
\end{align*}

\begin{lemma}\label{lem:normalnotnormal} Consider 
$\ga=\al\ol\la:=\al(\lambda^{w_1},\dots,\lambda^{w_n})$
in $\Gamma$. 
We have $\ga^*W_i=W_i(\ga\cdot \xbold)=\la^{d_i}W_i$.
Therefore $W_{i,\ga}=0$ if $\la$ is not a $d_i$-th root of unity. Otherwise, for $\la\in \pmmu_{d_i}$,
$W_i$ is fixed by $\ga$, and the following conditions are satisfied: 
$W_{i,\ga}\neq 0$ and its partial derivatives coincide with
those of $W_i$ on $\C^n_\ga$.
\end{lemma}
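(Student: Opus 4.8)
The plan is to peel off the three assertions one at a time, since each rests on a single structural feature. \emph{Step 1 (the one real computation): the eigenvalue identity $\ga^*W_i=\la^{d_i}W_i$.} I would substitute $\ga\cdot\xbold=(\al_1\la^{w_1}x_1,\dots,\al_n\la^{w_n}x_n)$ into $W_i$, then use, in this order, that $W_i$ is quasi-homogeneous of degree $d_i$ in the weights $\wbold$ (to pull out $\la^{d_i}$) and that by definition of $\Gamma$ the diagonal matrix $\al$ fixes $W_i$ (to absorb the $\al_j$):
$$W_i(\ga\cdot\xbold)=W_i\bigl(\la^{w_1}(\al_1x_1),\dots,\la^{w_n}(\al_nx_n)\bigr)=\la^{d_i}\,W_i(\al_1x_1,\dots,\al_nx_n)=\la^{d_i}\,W_i(\xbold).$$

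\emph{Step 2 (formal consequences of restriction).} Restricting the identity of Step 1 to $\C^n_\ga$: since $\ga$ acts as the identity there, $\ga^*W_i$ and $W_i$ have the same restriction, namely $W_{i,\ga}$, so $W_{i,\ga}=\la^{d_i}W_{i,\ga}$, i.e. $(1-\la^{d_i})W_{i,\ga}=0$; hence $W_{i,\ga}=0$ whenever $\la^{d_i}\neq1$. When $\la\in\pmmu_{d_i}$ the identity of Step 1 becomes $\ga^*W_i=W_i$, which is exactly the $\ga$-invariance of $W_i$. Finally, writing $I_\ga=\{l:\ga_l=1\}$ so that $\C^n_\ga=\{x_l=0:l\notin I_\ga\}$, the assignment $W_i\mapsto W_{i,\ga}$ is the substitution $x_l=0$ for $l\notin I_\ga$, which commutes with $\partial/\partial x_l$ for every $l\in I_\ga$; hence $\partial_l W_{i,\ga}=(\partial_l W_i)|_{\C^n_\ga}$ for all such $l$, the asserted coincidence of partial derivatives.

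\emph{Step 3 (the main obstacle): the non-vanishing $W_{i,\ga}\neq0$ for $\la\in\pmmu_{d_i}$.} This is the only place the non-degeneracy hypotheses (i)--(ii) enter, and it should be read with $\C^n_\ga\neq\{\pmb 0\}$ (when $n_\ga=0$, $W_{i,\ga}$ is merely the value $W_i(\pmb 0)=0$ and that sector carries no content). The approach I would take: assume $W_{i,\ga}\equiv0$; then every monomial of $W_i$ involves some variable $x_s$ with $s\notin I_\ga$, hence so does every monomial of $\partial_l W_i$ for $l\in I_\ga$, so $\partial_l W_i$ vanishes identically on $\C^n_\ga$ for every $l\in I_\ga$. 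The cleanest way to convert this into a contradiction is to prove the stronger stability statement that (i)--(ii) pass to the restriction, i.e. the polynomials $W_{j,\ga}$ with $\la^{d_j}=1$ again cut out a non-degenerate complete intersection in $\C^{n_\ga}$, so in particular none of them is the zero polynomial. This is the complete-intersection analogue of the classical fact that the restriction of a non-degenerate isolated hypersurface singularity to the fixed locus of a diagonal symmetry is again non-degenerate; the work lies in comparing the Jacobian criterion on $\C^n$ along $\C^n_\ga$ with the Jacobian criterion on $\C^{n_\ga}$, using that $\{W_1=\dots=W_r=0\}$ is $\ga$-stable and, in the relevant cases, meets $\C^n_\ga$ away from the origin. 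Steps 1 and 2 are routine; Step 3 is where I expect all the difficulty to sit.
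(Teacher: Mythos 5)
Your Steps 1 and 2 reproduce the parts of the lemma that the paper leaves implicit (the eigenvalue identity and the vanishing of $W_{i,\ga}$ when $\la^{d_i}\neq 1$), but the proposal misses the one observation that carries the whole lemma, and Step 3 is a plan rather than a proof. The paper's argument writes $W_i=W_{i,\ga}+f$ with $f$ in the ideal $\mathfrak m_\ga$ generated by the non-fixed variables and notes that, because the action is diagonal and $W_i$ is $\ga$-invariant, no monomial of $W_i$ can contain exactly one non-fixed variable to the first power; hence $f\in(\mathfrak m_\ga)^2$, and therefore $\partial_j W_i$ and $\partial_j W_{i,\ga}$ agree on $\C^n_\ga$ for \emph{all} $j=1,\dots,n$, i.e.\ the partials in the non-fixed directions vanish identically along $\C^n_\ga$. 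Your Step 2 only proves the fixed-direction identity $\partial_l W_{i,\ga}=(\partial_l W_i)|_{\C^n_\ga}$ for the $\ga$-fixed indices $l$, which holds for an arbitrary polynomial and uses none of the hypotheses; the non-fixed directions are the actual content, and they are what the paper needs afterwards (e.g.\ in the remark that the nonvanishing restrictions again cut out a nondegenerate complete intersection, where the Jacobian rows of the invariant $W_i$ must be supported on the fixed columns at points of $\C^n_\ga$ in order to transport linear independence from $\C^n$ to $\C^{n_\ga}$).

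This omission then undercuts Step 3. From $W_{i,\ga}\equiv 0$ you only deduce that the fixed-direction partials of $W_i$ vanish on $\C^n_\ga$; that alone cannot force the rank of the full $r\times n$ Jacobian below $r$, so no contradiction with (ii) follows. What is needed (and what the $(\mathfrak m_\ga)^2$ membership provides, since here $W_i$ is $\ga$-invariant) is that the \emph{entire} $i$-th row of the Jacobian vanishes at every point of $\C^n_\ga$, after which (ii) is contradicted at any point $p\neq\pmb 0$ of $\{W_1=\dots=W_r=0\}\cap\C^n_\ga$. The remaining issue---whether such a point exists---is exactly what you defer with ``in the relevant cases, meets $\C^n_\ga$ away from the origin'' and with the unproved ``stronger stability statement''; that statement is essentially the paper's subsequent remark, which is itself deduced from the present lemma, so as written your Step 3 is either circular or incomplete (to be fair, the paper's own ``by (ii)'' is also very terse on this last point, but it does supply the decisive structural input $f\in(\mathfrak m_\ga)^2$, which you never establish). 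You are right, on the other hand, that the nonvanishing must be read with $n_\ga>0$: when $\C^n_\ga=\{\pmb 0\}$ the restriction is identically zero, as in the narrow sectors of the hypersurface case.
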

\begin{proof}
Indeed, if $\la \in \pmmu_{d_i}$, then $W_{i}$ 
is of the form 
$W_{i,\ga}+f$, where  
$f$ belongs to the maximal ideal $\mathfrak m_\ga$ 
spanned by the 
variables $x_j$ not fixed by $\ga$ 
(\emph{i.e.} $\ga_j
=\al_j\la^{w_j}\neq 1$). In fact, since the action is diagonal, 
we have $f\in (\mathfrak m_\ga)^2$, 
which implies that, after restriction to 
$\C^n_\ga$,
the partial derivatives $\partial_j W_i$ 
and $\partial_j W_{i,\gamma}$ coincide for all $j=1,\dots,n$. In 
particular, by (ii), we have $W_{i,\ga}\neq 0$.
\end{proof}

\section{Calabi--Yau side.}
On the Calabi--Yau side of the correspondence, 
we consider the Chen--Ruan  orbifold cohomology of 
$[X_W/G]=[U/\Gamma]$. 
Since the groups acting are Abelian, we can express this explicitly 
as a direct sum of $\Gamma$-invariant parts of cohomology groups  
$$H^{p,q}_{\CR}([X_W/G];\Q)=\bigoplus_{\gamma\in \Gamma} 
H^{p-a_\ga,q-a_\ga}([X_{W,\ga}]; \Q)^\Gamma,$$
where $a_\ga$ is the age
of the action of $\ga$ on the tangent bundle of $X_W$ 
at a point of $X_{W,\ga}$
and $X_{W,\ga}$ is the 
quotient stack 
$$[\{W_{1,\ga}=\dots=W_{r,\ga}=0\}_{\C^n_\ga}/\C^\times]
\hookrightarrow \P(\wbold)_\ga,$$
where $\P(\wbold)_\ga$ is the weighted projective space spanned
by the $\ga$-fixed coordinates $x_j$.
\begin{rmk}We remark that the 
$$r_\ga:=\#\{W_{i,\ga}\mid W_{i,\ga}\neq 0\}\in [0,r]$$
restricted polynomials that do not vanish on $\C^n_\ga$ 
define a smooth complete intersection 
of $r_\ga$ hypersurfaces; indeed, 
the Jacobian matrix of the complete intersection in $\C^n_\ga$ 
has $r_\ga$ rows which, as pointed out above,
coincide with the restrictions of the corresponding rows in 
the Jacobian matrix of $X_W$ and, by (ii), form a matrix of rank $r_\ga$
at the points of $\C^n_\ga$ where $W_{i,\ga}=0$ for all $i$.
\end{rmk}
\begin{rmk}
All stacks considered in this paper are Deligne--Mumford stacks
which are global quotients $[Y/H]$. 
In these cases when we consider 
ordinary cohomology with coefficients in $\Q$ of 
the stack $[Y/H]$ and of the coarse scheme $Y/H$ we get the 
same result via a canonical isomorphism. 
The groups $H$ considered in this paper are 
at worse finite extensions of tori operating with finite stabilisers. 
Sometimes we consider non-faithful actions: 
$H\mapsto \mathrm{Aut}(Y)$ with 
nontrivial kernel $K$. In these cases it is important 
to notice that, whereas for ordinary cohomology
it does not matter if we consider $[Y/H]$ or $[Y/(H/K)]$, when it comes 
to Chen--Ruan orbifold cohomology these two stacks yield different cohomology groups, 
as the formula recalled above  obviously shows. 

As a consequence of these considerations we point out that in the above 
formula for Chen--Ruan orbifold cohomology $H^{p,q}_{\CR}([X_W/G];\Q)$, 
we can replace on the right hand side  
$H^*([X_{W,\ga}]; \Q)^\Gamma$ by $H^*([X_{W,\ga}]; \Q)^G$; indeed 
the identity component $\Gamma^0\cong \CC^\times$ acts trivially on $X_W$  and 
$\P(\wbold)$ and 
on each sector $X_{W,\ga}$ and $\P(\wbold)_\ga$. 
\end{rmk}

``Landau--Ginzburg models'' is an expression often used for 
$\C$-valued functions
defined on vector spaces or, more generally on vector bundles, 
as in this paper. In this context, the $\C$-valued functions are often called 
superpotentials.
In order to relate the Calabi--Yau complete intersection $X_W$ 
to a Landau--Ginzburg model, 
we rephrase its cohomology in 
terms of relative cohomology of a vector bundle.

For any $i=1,\dots, r$,
we can define a character $\chi_i\colon \Gamma\to \CC^\times$
by mapping $\al\ol\lambda\mapsto \la^{d_i}$; 
indeed, if $\al$ and $\be\in (\C^\times)^n$ preserve all polynomials 
$W_i$ and 
satisfy $\al\ol \la=\ga=\be\ol \mu\in \Gamma$ 
for some $\la,\mu\in \C^\times$, then
we have $\la^{d_i}=(\ga^*W_i)/W_i=\mu^{d_i}$ all $i$.
\begin{rmk}
We notice that $r_\ga$ equals $\#\{i\mid \chi_i(\ga)=0\}$.
See Lemma \ref{lem:normalnotnormal}.\end{rmk}
In this way,
we can define a $\Gamma$-action on $\C^{n+r}$ by
$$\ga\cdot(x_1,\ldots,x_n,p_1,\ldots,p_r) =
(\ga_1x_1,\ldots,\ga_nx_n,\chi_1(\ga)^{-1}p_1,\ldots,\chi_r^{-1}(\ga)p_r),
$$
or, more explicitly, by 
$
\al\ol \lambda\cdot(\xbold,{\pmb p}) =
(\al_1\lambda^{w_1}x_1,\ldots,\al_n\lambda^{w_n}x_n,\lambda^{-d_1}p_1,\ldots,\lambda^{-d_r}p_r).
$
We  consider the $\Gamma$-invariant $\C$-valued function 
\begin{align*}
\Wbar\colon \C^{n+r}&\to \C\\
(\pmb x,\pmb p)&\mapsto p_1W_1(\pmb x)+\ldots + p_rW_r(\pmb x).
\end{align*}
The fibre $M=\{\Wbar=t_0\}$ over 
any point $t_0\neq 0$ in $\C$ is smooth 
and its cohomology does not 
depend on the choice of $t_0\in \C^\times$.

We consider 
\begin{eqnarray*}
U_{\CY} & = & \C^n\setminus \{0\}\times \C^r 
\end{eqnarray*}
and the corresponding vector bundle $[U_{\CY}/\C^\times]$
$$\Ocal_\wbold(-\dbold):=\bigoplus_{i=1}^r \Ocal_{\P(\wbold)}(-d_i),$$
which we identify with the total space over $\P(\wbold)$.
If we consider the $\Gamma$-action on 
$\C^{n+r}$, we study the corresponding (total space of the) 
vector bundle $[\Ocal_\wbold(-\dbold)/ G]\to [\P(\wbold)/ G]$.

The function $\ol W$ descends to a $\C$-valued function $W_{\CY}$ 
on $[\Ocal_\wbold(-\dbold)/G]$ 
$$\ol W_{\CY}\colon [\Ocal_\wbold(-\dbold)/G]\to \C.$$ 

\begin{prop}\label{prop:Tate}
We have a canonical isomorphism 
$$H_{\CR}^{p-r,q-r}\left([X_W/G]\right)\cong 
H^{p,q}_{\CR}\left([\Ocal_{\wbold}(-\dbold)/ G],[F/G]\right),$$
where $F$ 
is the quotient stack $[M/\C^\times]$ inside 
$\Ocal_\wbold(-\dbold)$. \end{prop}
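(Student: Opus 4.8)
The plan is to reduce the statement to a classical Thom isomorphism applied sector-by-sector, once the age bookkeeping on both sides is reconciled. Since all groups involved are extensions of a torus by a finite abelian group and all stacks are global quotients, the Chen--Ruan cohomology of each side decomposes as a direct sum over $\ga \in \Gamma$ of $\Gamma$-invariant (equivalently $G$-invariant) parts of ordinary cohomology of the $\ga$-twisted sectors, with a degree shift by the age $a_\ga$ of the $\ga$-action on the relevant tangent bundle. So the first step is to fix $\ga \in \Gamma$ and match the $\ga$-sector of $[X_W/G]$ with the $\ga$-sector of the pair $([\Ocal_\wbold(-\dbold)/G],[F/G])$.

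On the left, the $\ga$-sector is $[X_{W,\ga}/\!\!\sim]$ sitting inside $\P(\wbold)_\ga$, where $X_{W,\ga}$ is cut out by the $r_\ga$ surviving equations $W_{i,\ga}$. On the right, the $\ga$-sector of the ambient bundle $\Ocal_\wbold(-\dbold)$ is the total space of $\bigoplus_i \Ocal(-d_i)\rest{\ga\text{-fixed}}$ over $\P(\wbold)_\ga$: by Lemma \ref{lem:normalnotnormal} the fibre coordinate $p_i$ is fixed by $\ga$ exactly when $\la \in \pmmu_{d_i}$, i.e. exactly when $W_{i,\ga}\neq 0$; the other $p_i$ are killed, so the $\ga$-fixed locus in the total space is the total space of the rank-$r_\ga$ subbundle $\bigoplus_{i\,:\,W_{i,\ga}\neq 0}\Ocal_{\P(\wbold)_\ga}(-d_i)$ over $\P(\wbold)_\ga$. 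The restricted superpotential on this sector is $\ol W_\ga = \sum_{i\,:\,W_{i,\ga}\neq 0} p_i W_{i,\ga}(\pmb x)$, and by the Cayley-trick remark $\{\ol W_\ga = t_0\}$ deformation-retracts onto the complement of $X_{W,\ga}$ in this total space (this uses that $X_{W,\ga}$ is smooth of the expected codimension $r_\ga$, which is the content of the Remark after the decomposition). Thus the relative cohomology of the $\ga$-sector of the pair is $H^*(\Tot,\Tot\setminus X_{W,\ga})$, which by the ordinary Thom isomorphism of the normal bundle of $X_{W,\ga}$ equals $H^{*-2r_\ga}(X_{W,\ga})$ as $\Gamma$-modules, with a Tate twist shifting bidegree by $(r_\ga,r_\ga)$.

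It remains to check that the age shifts line up so that the total bidegree shift is $(r,r)$, independently of $\ga$. On the left the shift is by $(a_\ga,a_\ga)$, the age of $\ga$ on $T X_W$. On the right the shift is the age $\tilde a_\ga$ of $\ga$ on $T(\Ocal_\wbold(-\dbold))$ restricted to the $\ga$-sector, plus the Thom twist $(r_\ga,r_\ga)$, minus the Thom twist $(r,r)$ coming from the left-hand Tate twist in the Proposition's statement — so I must verify $a_\ga + r = \tilde a_\ga + r_\ga$. Now $T(\Ocal_\wbold(-\dbold))$ along $\P(\wbold)$ splits as $T\P(\wbold)\oplus\bigoplus_i\Ocal(-d_i)$, and $\ga$ acts on the $p_i$-direction by $\chi_i(\ga)^{-1}=\la^{-d_i}$; restricting to the $\ga$-sector, the $r_\ga$ directions with $\la\in\pmmu_{d_i}$ contribute age $0$, while each of the remaining $r-r_\ga$ directions contributes $\age(\la^{-d_i})=1-\{d_i s\}$ if $\la=e^{2\pi i s}$, and these are precisely the normal directions in which the $\ga$-sector of the bundle differs from the full tangent space. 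Meanwhile $a_\ga$ is the age on $TX_W$, which via the conormal sequence for $X_W\subset\P(\wbold)$ relates the age on $T\P(\wbold)$ to the age on $\bigoplus_i\Ocal(-d_i)\rest{X_W}$; the Calabi--Yau condition $\sum d_i=\sum w_j$ (assumption (iii)) forces the total age of $\ga$ on $\bigoplus_i\Ocal(d_i)\ominus\bigoplus_j\Ocal(w_j)$ to be an integer, and tracking through gives exactly $a_\ga - \tilde a_\ga = r_\ga - r$. This age computation — keeping careful track of fractional parts $\{d_i s\}$ versus $\{w_j s\}$ and using Calabi--Yau to collapse them — is the only real obstacle; everything else is the standard Thom/Cayley package applied componentwise. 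Finally one checks the sector-by-sector isomorphisms are $\Gamma$-equivariant (they are, being induced by the Thom class, which is canonical), take $\Gamma$-invariants, and reassemble the direct sum to conclude.
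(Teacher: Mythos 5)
Your overall strategy is the one the paper follows: decompose both sides into $\ga$-sectors, identify the $\ga$-sector of the ambient bundle with the total space of the rank-$r_\ga$ subbundle $\bigoplus_{i:\chi_i(\ga)=1}\Ocal_{\P(\wbold)_\ga}(-d_i)$ over $\P(\wbold)_\ga$ and $F_\ga$ with the fibre of the restricted superpotential, prove the sectorwise isomorphism $H^k(\Ocal_{\wbold}(-\dbold)_\ga,F_\ga)\cong H^{k-2r_\ga}(X_{W,\ga})(-r_\ga)$, and then balance degrees via the age identity. However, two steps are stated incorrectly or left unproved. First, $F_\ga$ is not homotopy equivalent to the complement of $X_{W,\ga}$ in the total space if $X_{W,\ga}$ means its copy inside the zero section: with that reading $X_{W,\ga}$ has codimension $2r_\ga$ in the total space, the Thom isomorphism would shift degree by $4r_\ga$, and this contradicts the shift $2r_\ga$ you assert. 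The correct statement is that the inclusion of $F_\ga$ into the restriction of the bundle over $\P(\wbold)_\ga\setminus X_{W,\ga}$ (equivalently, the complement of the union of the fibres over $X_{W,\ga}$) induces an isomorphism in cohomology, because $F_\ga\to\P(\wbold)_\ga$ is a locally trivial fibration with contractible fibres $\A^{r_\ga-1}$ over $\P(\wbold)_\ga\setminus X_{W,\ga}$ and is empty over $X_{W,\ga}$; one then uses the pair $\bigl(\Tot,\Tot\setminus\pi^{-1}(X_{W,\ga})\bigr)$, pulled back from $\bigl(\P(\wbold)_\ga,\P(\wbold)_\ga\setminus X_{W,\ga}\bigr)$, and the codimension-$r_\ga$ Thom--Gysin isomorphism for the $\Q$-homology manifold $X_{W,\ga}$. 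This is precisely the content of Lemma \ref{lemma: cohom Milnor fiber} and \eqref{eq: relative cohomology} in the paper, so the repair is local, but as written your identification and your Thom shift cannot both be right.

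Second, the age comparison, which you yourself call the only real obstacle, is left at ``tracking through gives exactly'' and is attributed to the Calabi--Yau condition; in fact no Calabi--Yau hypothesis and no conormal-sequence bookkeeping are needed. Along $X_{W,\ga}$ one has $T\Ocal_{\wbold}(-\dbold)\cong TX_W\oplus\Ocal_{\wbold}(\dbold)\oplus\Ocal_{\wbold}(-\dbold)$, and for each of the $r-r_\ga$ indices with $\chi_i(\ga)\neq 1$ the pair of characters $\chi_i,\chi_i^{-1}$ contributes $\age(\chi_i(\ga))+\age(\chi_i(\ga)^{-1})=1$, while the remaining $r_\ga$ indices contribute $0$; hence $\tilde a_\ga=a_\ga+(r-r_\ga)$ in one line, which is exactly the paper's argument and exactly the identity $a_\ga+r=\tilde a_\ga+r_\ga$ your bookkeeping requires. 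With these two corrections your proposal coincides with the paper's proof; the degenerate sectors with $r_\ga\ge n_\ga$ (where $X_{W,\ga}=\varnothing$) are covered by the same argument, as the paper notes.
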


\begin{rmk}
$F$ is the generic  fibre of the morphism 
$\Ocal_\wbold(-\dbold)\to \Ocal\to \C$.
\end{rmk}
\begin{proof}
 On both sides we have a 
direct sum over $\ga\in\Gamma$
involving $\Gamma$-invariant classes of $X_{W,\ga}$ and 
of the pair 
$(\Ocal_{\wbold}(-\dbold)_\ga,F_\ga)$.
We notice that $\ga$ acts with age 
$a_\ga$ on the tangent bundle of $X_W$ 
and with age $a_\ga+(r-r_\ga)$ on the tangent bundle of 
$\Ocal_{\wbold}(-\dbold)$.
Indeed, using the chain of inclusions $X_W\subset\P(\wbold)\subset\Ocal_{\wbold}(-\dbold)$ (the last one being obtained by identifying $\P(\wbold)$ with the zero section) 
one readily shows that the normal bundle of 
$X_W$ inside $\Ocal_{\wbold}(-\dbold)$
is $\Ocal_{\wbold}(\dbold)\oplus \Ocal_{\wbold}(-\dbold)$.
For any fixed element $\ga\in \Gamma$, in view of age computations, 
we may ignore the 
$r_\ga$ summands of $\Ocal_{\wbold}(-\dbold)$ where 
$\ga$ acts trivially. 
Since, for a nontrivial character $\chi$, $\age(\chi)+\age(\chi^{-1})=1$
we obtain the desired age shift difference $r-r_\ga$.
 
Finally we need to check
\begin{equation}\label{eq:sectorclaim}H^{p-r_\ga,q-r_\ga}(X_{W,\ga})=
H^{p,q}(\Ocal_{\wbold}(-\dbold)_\ga,F_\ga),\end{equation}
which requires the following analysis of $F_\ga$.

\begin{lemma}
\label{lemma: cohom Milnor fiber} 
For any $\ga\in \Gamma$, if $r_\ga<n_\ga$, 
then $X_{W,\ga}$ is nondegenerate and we have 
$$
H^k(F_\ga) = \left\{
\begin{array}{cc}
H^k(\P(\wbold)_\ga) & 0\le k\le 2r_\ga-2 \\
H^{n_\ga-r_\ga-1}_{\rm pr}(X_{W,\ga}) & k=n_\ga+r_\ga-2 \\
0 & {\rm\ otherwise}
\end{array}
\right.
$$
If $r_\ga\ge n_\ga$ then $H^k(F_\ga)\cong H^k(\P(\wbold)_\ga)$ 
for all $k$.
\end{lemma}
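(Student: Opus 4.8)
The plan is first to reduce $H^*(F_\ga)$ to the cohomology of the complement $\P(\wbold)_\ga\setminus X_{W,\ga}$, and then to invoke the classical description of the latter. First I would restrict $\Wbar$ to the $\ga$-fixed locus $\C^{n+r}_\ga$. By Lemma~\ref{lem:normalnotnormal} a coordinate $p_i$ is $\ga$-fixed exactly when $W_{i,\ga}\neq 0$, so $\C^{n+r}_\ga=\C^{n_\ga}_{\pmb x}\times\C^{r_\ga}_{\pmb p}$ and, after reindexing, $\Wbar_\ga=\sum_{i=1}^{r_\ga}p_iW_{i,\ga}(\pmb x)$ with every $W_{i,\ga}\neq 0$; by the Remark following the Chen--Ruan formula these polynomials define the smooth complete intersection $X_{W,\ga}\subset\P(\wbold)_\ga$. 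Since $t_0\neq 0$ the fibre $M_\ga=\{\Wbar_\ga=t_0\}$ avoids $\{\pmb x=\pmb 0\}$, and its projection to $\C^{n_\ga}\setminus\{\pmb 0\}$ has image the complement of $\{W_{1,\ga}=\dots=W_{r_\ga,\ga}=0\}$, over which the equation $\sum p_iW_{i,\ga}(\pmb x)=t_0$ is non-constant in $\pmb p$; hence the fibres of that projection are affine hyperplanes of dimension $r_\ga-1$, so $M_\ga$ is a torsor under a vector bundle over this complement and in particular homotopy equivalent to it. Passing to $\C^\times$-quotients (the Cayley trick, cf.\ \cite[5.4]{WiLGCY}) yields $H^*(F_\ga)\cong H^*(\P(\wbold)_\ga\setminus X_{W,\ga})$. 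If $r_\ga\ge n_\ga$ then, by assumption (ii) applied to the restricted data, the common zero locus of the $W_{i,\ga}$ has no nonzero point (a rank-$r_\ga$ Jacobian is impossible there when $r_\ga>n_\ga$, and forces a smooth $0$-dimensional $\C^\times$-cone, hence only the origin, when $r_\ga=n_\ga$), so $X_{W,\ga}=\varnothing$ and $H^*(F_\ga)\cong H^*(\P(\wbold)_\ga)$, which is the second case.

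So assume $r_\ga<n_\ga$; write $B=\P(\wbold)_\ga$, $X=X_{W,\ga}$, $N=n_\ga-1=\dim B$, $m=N-r_\ga=\dim X\ge 0$, and let $i\colon X\hookrightarrow B$, $j\colon B\setminus X\hookrightarrow B$ be the inclusions. I would then run the Gysin--residue long exact sequence
$$\cdots\to H^{k-2r_\ga}(X)\xrightarrow{i_*}H^k(B)\xrightarrow{j^*}H^k(B\setminus X)\xrightarrow{\Res}H^{k-2r_\ga+1}(X)\to\cdots$$
and feed in the standard inputs: rationally $H^*(B)=\Q[h]/(h^{N+1})$ is concentrated in even degrees; $\cup[X]$ is a nonzero multiple of $\cup h^{r_\ga}$; $i^*$ is an isomorphism in degrees $<m$ and injective in degree $m$ (the Lefschetz hyperplane theorem iterated $r_\ga$ times); Poincar\'e duality on $X$; and the classical splitting $H^m(X)=i^*H^m(B)\oplus H^m_{\rm pr}(X)$ with $H^m_{\rm pr}(X)=\ker i_*$, for which I would cite \cite{Dimca,Na}. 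A degree-by-degree chase then shows that $j^*$ is an isomorphism for $0\le k\le 2r_\ga-2$; that in the range $2r_\ga-1\le k\le m+2r_\ga-2$ and for $k\ge m+2r_\ga$ the two Gysin maps flanking $H^k(B\setminus X)$ are respectively surjective and injective, so $H^k(B\setminus X)=0$; and that in the single remaining degree $k=m+2r_\ga-1=n_\ga+r_\ga-2$ the map $j^*$ vanishes and $\Res$ identifies $H^k(B\setminus X)$ with $\ker i_*=H^m_{\rm pr}(X)=H^{n_\ga-r_\ga-1}_{\rm pr}(X_{W,\ga})$. This is precisely the list in the statement, the degenerate subcase $m=0$ (where $2r_\ga-1=n_\ga+r_\ga-2$) being absorbed by reading $H^0_{\rm pr}$ as reduced $H^0$; alternatively the whole of this paragraph can be quoted from \cite{Dimca}.

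The genuine content, and the only place requiring care, is the reduction in the first paragraph: one must check that $\Wbar_\ga$ really is fibrewise linear with the non-degeneracy that makes $F_\ga$ an affine bundle over $\P(\wbold)_\ga\setminus X_{W,\ga}$ --- here Lemma~\ref{lem:normalnotnormal} does the work, since it is what guarantees that the smoothness of $X_{W,\ga}$ is inherited from that of $X_W$ --- and that the Deligne--Mumford nature of $\P(\wbold)_\ga$ and $X_{W,\ga}$ is harmless, which holds because with $\Q$-coefficients their cohomology coincides with that of the coarse spaces. Everything after that step is routine bookkeeping in the long exact sequence.
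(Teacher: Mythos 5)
Your argument is correct and follows essentially the same route as the paper: reduce $H^*(F_\ga)$ to $H^*(\P(\wbold)_\ga\setminus X_{W,\ga})$ via the affine-bundle projection (the paper trivialises $\pi\colon F\to\P(\wbold)$ over the sets $\{W_i\neq 0\}$, you work $\C^\times$-equivariantly upstairs), and then run the Gysin long exact sequence of the pair $(\P(\wbold)_\ga,\P(\wbold)_\ga\setminus X_{W,\ga})$ together with the Lefschetz hyperplane theorem. The only difference is that you spell out the degree-by-degree chase and the emptiness of $X_{W,\ga}$ when $r_\ga\ge n_\ga$, which the paper leaves implicit.
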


\begin{proof} Set $X=X_{W,\ga}$ and $F=F_\ga$. By abuse of notation 
we write $\P(\wbold)_\ga$ as $\P(\wbold)$, $n_\ga$ and $r_\ga$ as $n$ and $r$. 
Consider the projection map
$\pi\colon F\to\P(\wbold)$. Since we may regard $F$ as
the variety defined by the equation $\sum_i
p_iW_i(x) = 1$, the fiber $\pi^{-1}(x)$ is empty if $x\in
X$ and an affine hyperplane isomorphic to $\A^{r-1}$ if $x\notin
X$. More precisely, $\pi$ is a locally trivial fibration over $\P(\wbold)\setminus X$ with fiber $\A^{r-1}$ (trivialize over the open subsets 
$U_i = \{\xbold\in\P(\wbold)|W_i(\xbold)\ne 0\}$). Hence $H^*(F)\cong H^*(\P(\wbold)\setminus X)$. As $X$ is a $\Q$-homology
manifold we have an isomorphism of Hodge structures
$$
H^k(\P(\wbold),\P(\wbold)\setminus X;\Q)\cong H^{k-2r}(X;\Q)(-r).
$$
Hence the long exact sequence of relative cohomology for the pair $(\P(\wbold),\P(\wbold)\setminus X)$ can be rewritten as 
$$
H^k(\P(\wbold))\to H^k(\P(\wbold)\setminus X)\to H^{k-2r+1}(X)\xrightarrow{i_*} H^{k+1}(\P(\wbold))
$$
where cohomology is with $\Q$-coefficients and the Gysin map $i_*$ is Poincar\'e dual to the map 
$$
i_*:H_{2n-k-3}(X)\to H_{2n-k-3}(\P(\wbold)).
$$
The result then follows from the Lefschetz hyperplane theorem.
\end{proof}
We now continue the proof of Proposition \ref{prop:Tate} If $r_\ga<n_\ga$ and $X_W$ is nondegenerate then
\begin{equation}
\label{eq: relative cohomology}
H^k(\Ocal_{\wbold}(-\dbold)_\ga,F_\ga)\cong H^{k-2r}(X_\ga)(-r).
\end{equation}
because the pullback via 
$\pi\colon \Ocal_{\wbold}(-\dbold)\to \P(\wbold)$ 
induces an isomorphism 
$$
H^k(\P(\wbold)_\ga,\P(\wbold)_\ga\setminus X_{W,\ga}))
\xrightarrow{\sim} H^k(\Ocal_{\wbold}(-d)_\ga,F_\ga)
$$
and $H^k(\P(\wbold)_\ga,\P(\wbold)_\ga\setminus X_{W,\ga}))
\cong H^{k-2r}(X_{W,\ga})(-r)$ as we have seen above. If 
$r_\ga\ge n_\ga$ the same results hold, with $X = \varnothing$.
\end{proof}

\section{Hybrid Landau--Ginzburg side}
On the other side 
we consider the open set 
$$U_{\LG}=\C^n\times (\C^r\setminus\{\pmb 0\})$$
and the (total space of the) vector bundle \begin{eqnarray*}
{[{U_{\LG}/\C^\times}]}=\bigoplus_{j=1}^n
\Ocal_{\P(\dbold)}(-w_i),
\end{eqnarray*}
which we simply denote by $\Ocal_\dbold(-\wbold)$.
We notice that the Milnor fiber 
$M = \Wbar^{-1}(t_0)$ for $t_0\neq 0$ is contained in both
$U_{CY}$ and $U_{LG}$.
Its cohomology does not depend on $t_0$ and, after modding 
out the $\C^\times$-action, has been completely described in 
Lemma \ref{lemma: cohom Milnor fiber}.

We recall that $\Gamma$ is a group of diagonal symmetries acting on 
$\C^n$; as in the previous section we extend its action 
to $\C^n\times\C^r$.
We now consider the {\em hybrid LG model}
$(\Ocal_{\dbold}(-\wbold),\Wbar)$ with superpotential
$$
\Wbar\colon[\Ocal_{\dbold}(-\wbold)/G]\to\C. $$
As above, the quotient stack $[\Ocal_{\dbold}(-\wbold)/G]$ may be 
presented as 
$[U_{\LG}/\Gamma]$ and may be regarded as a 
vector bundle over $[\P(\dbold)/G]$.

\begin{defn}\label{defn:states}
The generalized state space of the hybrid Landau--Ginzburg model $(\Ocal_{\dbold}(-\wbold),\Wbar)$ is 
$$
\Hcal^{p,q}_{\Gamma}(W_1,\ldots,W_r):
=
H^{p+r,q+r}_{\CR}([\Ocal_{\dbold}(-\wbold)/G],[F/G]),
$$
where $G$ is the component group $\Gamma/\Gamma^0$.
\end{defn}

\begin{rmk} 
We point out that the space $\Hcal^{p,q}(W_1,\ldots,W_r)$ defined in \eqref{hybrid LG statespace} coincides with the
above definition for $\Gamma=\C^\times$. 
\end{rmk}

We can describe in greater detail the relative Chen--Ruan orbifold cohomology
$$H^{p,q}_{\CR}([\Ocal_{\P(\dbold)}(-\wbold)/G],[F/G])=
\bigoplus_{g\in G}
H^{p-a_g,q-a_g}([\Ocal_{\P(\dbold)}(-\wbold)/G]_g,[F/G]_g)$$
or, more explicitly, 
\begin{equation}
\label{eq:FJRW}
 H^{p,q}_{\CR}([U_{\LG}/\Gamma],[M/\Gamma])=
\bigoplus_{\ga\in \Gamma}
H^{p-a_\ga,q-a_\ga}(\Ocal_{\dbold_\ga}(-\wbold_\ga),F_\ga)^\Gamma,
\end{equation}
where $\dbold_\ga$ and $\wbold_\ga$ are the multi-indices obtained by suppressing the entries $i$ and $j$ for which $\ga_i\neq 1$, and 
$a_\ga$ is the age of the action of $\ga$ on $\C^n\times \C^r$.
We recall again that $H^*(\Ocal_{\dbold_\ga}(-\wbold_\ga),F_\ga)^\Gamma$
is just the group of $G$-invariant classes  
$H^*(\Ocal_{\dbold_\ga}(-\wbold_\ga),F_\ga)^G$. 

This allows us to give an explicit presentation of FJRW cohomology
as a direct sum of two types of terms reflecting the 
hybrid nature of our Landau--Ginzburg models: 
a chiral algebra term computing the primitive cohomology via
the Jacobian ring, and a term computing the fixed cohomology. 

The analogue of the chiral algebra
of the Jacobian ring 
of an isolated singularity is defined as follows.
For any smooth 
complete intersection of $\wt r$ hypersurfaces $\{W_i=0\}$ 
(with  
$i=1,\dots, \wt r$) inside 
a weighted projective space of $\wt n>\wt r$ coordinates 
we consider the generalised chiral algebra
$$\mathcal Q_{\wt W}= 
dx_1\wedge\ldots\wedge dx_{\wt n}\wedge 
dp_1\wedge\ldots\wedge dp_{\wt r}\otimes
\CC[x_1,\dots,x_{\wt n},p_1,\dots,p_{\wt r}]/(\partial_{x_j}\wt W,
\partial_{p_i}\wt W),$$
where $\wt W=\sum_{i=1}^{\wt r}p_i W_i(\xbold)$. This is the Jacobian 
ring from \cite{Dimca} or \cite{Na} tensored with the 
top-degree form. Its 
$\CC^\times$-invariant part 
is finite dimensional.
For $\wt D= \wt n-\wt r-1$, by \cite[Thm. 7]{Dimca}, \cite[Thm. 2.16]{Na}, we have 
$$(\mathcal Q_{\wt W})^{\C^\times}\cong 
H^{\wt D}_{\rm pr}(X_{W,\ga})(-r).$$
If we assign bi-degree $(\wt D-k,k)$ to each term of 
$\mathcal Q_{\wt W}$ 
degree $k$ in the variables $p_1,\dots,p_{\wt r}$ we may regard 
the above isomorphism as a bi-degree preserving isomorphism. 
Note that in both papers cited above the result is stated 
for complete intersections in projective space; 
the same proof goes through in the case of weighted 
projective spaces \cite[Remark 18 (i)]{Dimca}.

Finally for any non-negative 
integer $\wt n$ and for any $\wt r\ge \wt n$ 
we need to consider the cohomology of $\P(\dbold)$ in degrees $0,\ldots,2r-2n-2$.
We write the bi-degree preserving identification 
$$dt(-\wt n) \otimes \CC[t]/
(t^{\wt r-\wt n})\cong 
\bigoplus_{k=0}^{\wt r- \wt n - 1} H^{2k}(\P(\dbold)).$$
where 
 $dt(-\wt n)\otimes t^k$ has bidegree $(\wt n,\wt n)+(k,k)$.

\begin{thm}
\label{thm:FJRW}
We have 
$$\Hcal^{*}_{\Gamma}(W_1,\ldots,W_r)=
\oplus_{\ga\in \Gamma} H_\ga(-a_\ga+r),$$
where $H_\ga$ with its double grading 
is given by 
$$H_\ga=\begin{cases} (\mathcal Q_{W_\ga})^\Gamma & 
\text{if }r_\ga<n_\ga,\\
dt (-n_\ga)\otimes \CC[t]/(t^{r_\ga-n_\ga}) 
& \text{if }r_\ga\ge n_\ga.\end{cases}$$
\end{thm}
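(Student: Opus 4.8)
The plan is to reduce Theorem~\ref{thm:FJRW} to a computation carried out one sector at a time, and then to feed in Lemma~\ref{lemma: cohom Milnor fiber} together with the classical Jacobian-ring descriptions of the primitive cohomology of complete intersections. First I would unwind Definition~\ref{defn:states} and the direct sum decomposition~\eqref{eq:FJRW}. Since the identity component $\Gamma^0\cong\C^\times$ acts trivially on $U_{\LG}$, on the Milnor fibre and on every fixed locus, $\Gamma$-invariants agree with $G$-invariants, so it is enough to produce, for each $\ga\in\Gamma$, an isomorphism of bigraded $G$-modules between $H^{*,*}(\Ocal_{\dbold_\ga}(-\wbold_\ga),F_\ga)$ and a suitably regraded copy of $\mathcal Q_{W_\ga}$ when $r_\ga<n_\ga$, or of $dt(-n_\ga)\otimes\CC[t]/(t^{r_\ga-n_\ga})$ when $r_\ga\ge n_\ga$. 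The Tate twist $(-a_\ga+r)$ in the statement only records the overall shift $H^{p,q}\mapsto H^{p+r,q+r}$ built into Definition~\ref{defn:states} together with the age $a_\ga$ occurring in~\eqref{eq:FJRW}, so modulo this bookkeeping the content is a bigraded identification of the relative cohomology of the pair $(\Ocal_{\dbold_\ga}(-\wbold_\ga),F_\ga)$.

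For a fixed $\ga$ I would run the long exact sequence of the pair $(\Ocal_{\dbold_\ga}(-\wbold_\ga),F_\ga)$ as a sequence of mixed Hodge structures, using two inputs. First, $\Ocal_{\dbold_\ga}(-\wbold_\ga)$ deformation retracts onto its zero section $\P(\dbold_\ga)$, so $H^*(\Ocal_{\dbold_\ga}(-\wbold_\ga))=\Q[h_p]/(h_p^{r_\ga})$ with $h_p$ the hyperplane class. Second, via the projection $\pi\colon F_\ga\to\P(\wbold)_\ga$ forgetting the $\pmb p$-coordinates — an $\A^{r_\ga-1}$-bundle over $\P(\wbold)_\ga\setminus X_{W,\ga}$ — one gets $H^*(F_\ga)\cong H^*(\P(\wbold)_\ga\setminus X_{W,\ga})$, as in the proof of Lemma~\ref{lemma: cohom Milnor fiber}. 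The crux is to identify the restriction map $j^*\colon H^*(\Ocal_{\dbold_\ga}(-\wbold_\ga))\to H^*(F_\ga)$, and here I claim $j^*h_p=-h_x$, where $h_x$ is the hyperplane class pulled back from $\P(\wbold)_\ga$ along $\pi$. Indeed, writing $\pi'\colon\Ocal_{\dbold_\ga}(-\wbold_\ga)\to\P(\dbold_\ga)$ for the bundle projection and fixing $i$ with $W_{i,\ga}\ne 0$: the function $p_iW_{i,\ga}(\pmb x)$ is $\C^\times$-invariant, $p_i$ is a section of $(\pi')^*\Ocal_{\P(\dbold_\ga)}(d_i)$ and $W_{i,\ga}(\pmb x)$ a section of $(\pi')^*\Ocal_{\P(\dbold_\ga)}(-d_i)$, so the divisor $\{W_{i,\ga}=0\}$ has class $-d_ih_p$; but restricted to $F_\ga$ this divisor is $\pi^{-1}\big(V(W_{i,\ga})\setminus X_{W,\ga}\big)$ — over $[\pmb x]$ with $W_{i,\ga}(\pmb x)=0$ the fibre of $F_\ga$ keeps the $p_i$-coordinate free and is nonempty exactly off $X_{W,\ga}$ — so its class is $d_ih_x$, forcing $-d_i\,j^*h_p=d_ih_x$. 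This identification of $j^*$ is the step I expect to be the main obstacle: everything downstream is a mechanical reading of the exact sequence, but one must handle carefully the two descriptions of $F_\ga$ — as an affine bundle over $\P(\wbold)_\ga\setminus X_{W,\ga}$ through $\pi$ and as a divisor of the bundle $\Ocal_{\dbold_\ga}(-\wbold_\ga)\to\P(\dbold_\ga)$ through $\pi'$ — and check that all maps in play are morphisms of mixed Hodge structures, so that the bigradings are the Hodge bigradings.

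Granting $j^*h_p=-h_x$, the two cases separate. If $r_\ga\ge n_\ga$ then $H^*(F_\ga)=\Q[h_x]/(h_x^{n_\ga})$ and $j^*$ is the evident surjection with kernel $(h_p^{n_\ga})$, so $H^*(\Ocal_{\dbold_\ga}(-\wbold_\ga),F_\ga)=\bigoplus_{k=n_\ga}^{r_\ga-1}\Q\,h_p^k$, living in Hodge types $(k,k)$; this is exactly $dt(-n_\ga)\otimes\CC[t]/(t^{r_\ga-n_\ga})$ under the identification recalled just before the theorem. If $r_\ga<n_\ga$ then $j^*(h_p^k)=(-h_x)^k\ne 0$ for $0\le k\le r_\ga-1\le n_\ga-1$, so $j^*$ is injective and $H^k(\Ocal_{\dbold_\ga}(-\wbold_\ga),F_\ga)\cong\coker\big(j^*\colon H^{k-1}(\Ocal_{\dbold_\ga}(-\wbold_\ga))\to H^{k-1}(F_\ga)\big)$; by the degree ranges in Lemma~\ref{lemma: cohom Milnor fiber} and the vanishing $H^{n_\ga+r_\ga-2}(\P(\dbold_\ga))=0$ (valid since $n_\ga>r_\ga$), this cokernel is concentrated in cohomological degree $n_\ga+r_\ga-1$, where it equals $H^{n_\ga+r_\ga-2}(F_\ga)=H^{n_\ga-r_\ga-1}_{\rm pr}(X_{W,\ga})(-r_\ga)$. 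I would then invoke \cite[Thm.~7]{Dimca} and \cite[Thm.~2.16]{Na} to identify this primitive cohomology, with its Hodge bigrading, with $(\mathcal Q_{W_\ga})^{\C^\times}$, take $G$-invariants everywhere, sum over $\ga\in\Gamma$ and reassemble via~\eqref{eq:FJRW}. What then remains is the routine tracking of Tate twists, ensuring that the grading by $p$-degree on $\mathcal Q_{W_\ga}$ and the grading by $t$-degree on $\CC[t]/(t^{r_\ga-n_\ga})$ are carried to the prescribed Hodge bidegrees in $\Hcal^{*}_\Gamma(W_1,\dots,W_r)$.
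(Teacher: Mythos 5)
Your proposal is correct, and it establishes the crucial sector\mbox{-}wise computation (the content of Lemma~\ref{lemma: relative cohom II}) by a genuinely different route from the paper. The paper never runs the long exact sequence of the pair $(\Ocal_{\dbold}(-\wbold),F)$ with an explicit identification of the restriction map; instead it compares the LG bundle with the CY bundle $\Ocal_{\wbold}(-\dbold)$ through their common open part (the complement of the zero sections), using the exact sequences of the two triples, the Thom\mbox{-}type isomorphism $H^k(\Ocal_{\dbold}(-\wbold),\Ocal_{\dbold}(-\wbold)^{\times})\cong H^{k-2n}(\P(\dbold))$, the CY\mbox{-}side identity $H^k(\Ocal_{\wbold}(-\dbold),F)\cong H^{k-2r}(X)(-r)$, the five lemma to get $H^k(\Ocal_{\wbold}(-\dbold)^{\times},F)\cong H^{k-2r+1}(\P(\wbold),X)$, and finally an Euler\mbox{-}characteristic count to settle the leftover degree $k=2r-1$ when $r\ge n$. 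You instead work with the pair directly and pin down $j^*h_p=-h_x$; your divisor argument for this is sound (and in fact only $j^*h_p=c\,h_x$ with $c\neq 0$ is needed, which also follows from the character description of line bundles on the quotient), after which surjectivity, respectively injectivity, of $j^*$ handles both cases uniformly, including the vanishing of $H^{2r-1}$ without any Euler\mbox{-}characteristic argument. Both routes consume the same external inputs, namely Lemma~\ref{lemma: cohom Milnor fiber} for $H^*(F_\ga)$ and the Dimca--Nagel identification of $(\mathcal Q_{W_\ga})^{\C^\times}$ with primitive cohomology, and both leave the final matching of age shifts and Tate twists as bookkeeping. What your approach buys is a more elementary, self\mbox{-}contained sector computation that also exhibits the ``broad $=$ primitive'' dichotomy concretely as the cokernel of $j^*$; what the paper's detour buys is the reuse of the CY\mbox{-}side relative computation and the intermediate isomorphism $H^k(\Ocal^{\times},F)\cong H^{k-2r+1}(\P(\wbold),X)$, which fits the variation\mbox{-}of\mbox{-}GIT comparison exploited later in Theorem~\ref{thm}. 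One reassuring detail: your placement of the primitive contribution in cohomological degree $n_\ga+r_\ga-1$ agrees with what the paper's proof actually establishes (the displayed statement of Lemma~\ref{lemma: relative cohom II} contains an off\mbox{-}by\mbox{-}one slip, writing $k=n+r-2$).
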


The proof follows from the analysis of the relative cohomology of
$(\Ocal_{\P(\dbold_\ga)}(-\wbold_\ga)_\ga,F_\ga)$ for each sector. 
We assume $G=1$ for simplicity; as we saw above, nontrivial groups $G$ can 
be treated easily by the same argument. By abuse of notation we write
$\dbold,\wbold$ and $F$ instead of $\dbold_\ga,\wbold_\ga$ and $F_\ga$. The following lemma provides a complete picture.
\begin{lemma}
\label{lemma: relative cohom II}
If $r<n$ and $X$ is a nondegenerate 
complete intersection, 
then
$$
H^k(\Ocal_{\dbold}(-\wbold),F) = \left\{
\begin{array}{cc}
H^{n-r-1}_{\rm pr}(X)(-r) & {\rm\ if\ } k = n+r-2 \\
0 & {\rm\ if\ } k\ne n+r-2.
\end{array}
\right.
$$
If $r\ge n$ then 
$$
H^k(\Ocal_{\dbold}(-\wbold),F)\cong 
\left\{
\begin{array}{cc}
H^{k-2n}(\P(\dbold))(-n)& {\rm\ if\ } 2n\le k\le 2r-2 \\
0 & {\rm otherwise.}
\end{array}
\right.
$$
\end{lemma}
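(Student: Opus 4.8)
The plan is to compute the relative cohomology $H^k(\Ocal_{\dbold}(-\wbold),F)$ directly, using the structure of $\Ocal_{\dbold}(-\wbold)$ as a vector bundle over $\P(\dbold)$ and the description of $F$ already obtained in Lemma \ref{lemma: cohom Milnor fiber}. First I would observe that $\Ocal_{\dbold}(-\wbold)$ deformation retracts onto its zero section $\P(\dbold)$, so $H^*(\Ocal_{\dbold}(-\wbold))\cong H^*(\P(\dbold))$, which is concentrated in even degrees $0,2,\dots,2r-2$ with a one-dimensional piece in each, each $H^{2k}(\P(\dbold))$ being pure of type $(k,k)$. The main tool is then the long exact sequence of the pair $(\Ocal_{\dbold}(-\wbold),F)$, which reads
$$\cdots\to H^{k-1}(F)\to H^k(\Ocal_{\dbold}(-\wbold),F)\to H^k(\P(\dbold))\to H^k(F)\to\cdots,$$
together with the fact that the restriction map $H^k(\P(\dbold))\to H^k(F)$ is induced by the projection $F\to\P(\dbold)$ composed with the zero section, i.e. it is just the pullback $\pi^*$ along the fibration $\pi\colon F\to\P(\dbold)$.

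Next I would split into the two cases. When $r\ge n$, Lemma \ref{lemma: cohom Milnor fiber} (applied to $\P(\dbold)$ in the role of ``$\P(\wbold)_\ga$'', so with $r$ massless and $n$ massive coordinates — note the roles of $n$ and $r$ are swapped on this side) gives $H^k(F)\cong H^k(\P(\dbold))$ for all $k$, and one checks the isomorphism is exactly $\pi^*$, hence the connecting maps vanish and the relative group is the mapping cone of an isomorphism shifted: more carefully, $H^k(\Ocal_{\dbold}(-\wbold),F)$ fits in $0\to\coker(H^{k-1}(\P(\dbold))\xrightarrow{\pi^*}H^{k-1}(F))\to H^k(\Ocal_{\dbold}(-\wbold),F)\to\ker(H^k(\P(\dbold))\xrightarrow{\pi^*}H^k(F))\to 0$, so it vanishes whenever $\pi^*$ is an isomorphism in both degrees $k-1$ and $k$. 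Here I need to be a little careful: $\pi^*$ is an isomorphism onto $H^*(F)$ but $\dim\P(\dbold)=r-1$ so $H^*(\P(\dbold))$ stops at degree $2r-2$, whereas $F$ is affine of dimension $n+r-2$. The relevant statement from Lemma \ref{lemma: cohom Milnor fiber} in the case ``$r\ge n$'' is that $H^k(F)\cong H^k(\P(\dbold))$ as graded groups; combined with the Thom isomorphism $H^k(\Ocal_{\dbold}(-\wbold),F)\cong H^{k-2n}(\P(\dbold))(-n)$ coming from the rank-$n$ bundle structure — this is precisely \eqref{eq: relative cohomology} with $X=\varnothing$, $\dbold\leftrightarrow\wbold$ and $n\leftrightarrow r$ interchanged — one reads off $H^k(\Ocal_{\dbold}(-\wbold),F)\cong H^{k-2n}(\P(\dbold))(-n)$ for $2n\le k\le 2r-2$ and $0$ otherwise, matching the claim with the correct Tate twist since $H^{k-2n}(\P(\dbold))$ is pure of weight $k-2n$ and the twist $(-n)$ raises it to weight $k$.

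When $r<n$, I invoke the nondegeneracy of $X=X_{W,\ga}$ — guaranteed by assumption (ii) via the Jacobian rank argument of Lemma \ref{lem:normalnotnormal} and the first Remark of \S3 — and use Lemma \ref{lemma: cohom Milnor fiber} again, now in the case $r<n$, which tells us $H^k(F)$ agrees with $H^k(\P(\dbold))$ for $0\le k\le 2r-2$ and has exactly one extra summand $H^{n-r-1}_{\rm pr}(X)(-r)$ in degree $n+r-2$, vanishing otherwise. In degrees $0\le k\le 2r-2$ the map $\pi^*$ is an isomorphism (it is the pullback along the affine-space fibration over $\P(\dbold)\setminus X$, restricted from $H^*(\P(\dbold))$), so the relative cohomology vanishes there, except possibly a contribution from the cokernel in degree $2r-1$ or from the new class in degree $n+r-2$; since $r<n$ forces $n+r-2>2r-2$, i.e. the extra class sits strictly above the range where $H^*(\P(\dbold))$ lives, the long exact sequence isolates it: $H^{n+r-2}(\Ocal_{\dbold}(-\wbold),F)$ receives $H^{n+r-3}(F)=0$ on the left and maps to $H^{n+r-2}(\P(\dbold))=0$, while the connecting map $H^{n+r-2}(F)\to H^{n+r-1}(\Ocal_{\dbold}(-\wbold),F)$ lands in a group that is again $H^{n+r-1-2n}(\P(\dbold))(-n)=0$ by the Thom isomorphism for $n+r-1<2n$. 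Hence $H^{n+r-2}(\Ocal_{\dbold}(-\wbold),F)\cong H^{n+r-2}(F)$, and tracking which summand survives gives $H^{n-r-1}_{\rm pr}(X)(-r)$ — here I should double-check the weight: $H^{n-r-1}_{\rm pr}(X)$ has weight $n-r-1$, and $(-r)$ raises it to $n+r-1$, which is the correct weight for a degree-$(n+r-1)$ class, so the Tate twist in the statement is $(-r)$ as written (the statement's ``$(-r)$'' matches; note it is $r=r_\ga$ here, not the global $r$). The main obstacle I anticipate is the bookkeeping of the two different ``directions'' — the Thom/Tate twist $(-n)$ coming from the $n$-dimensional fibre of $\Ocal_{\dbold}(-\wbold)\to\P(\dbold)$, versus the twist $(-r)$ inherited from the primitive cohomology of $X$ inside the $r$-dimensional $\P(\dbold)$ via the analysis on the CY side — and making sure the long exact sequence degrees and the purity of the pieces are all consistent so that each relative cohomology group is a single pure summand with the advertised Hodge type; the geometric input (affine fibration, Lefschetz, nondegeneracy) is all already in place from the earlier lemmas.
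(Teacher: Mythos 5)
There are genuine gaps here, and they are not cosmetic. The key tool you invoke -- a ``Thom isomorphism'' $H^k(\Ocal_{\dbold}(-\wbold),F)\cong H^{k-2n}(\P(\dbold))(-n)$, presented as \eqref{eq: relative cohomology} with $\wbold\leftrightarrow\dbold$ and $n\leftrightarrow r$ interchanged -- is not an available fact. The honest Thom isomorphism for the rank-$n$ bundle $\Ocal_{\dbold}(-\wbold)\to\P(\dbold)$ is relative to the complement of the zero section, not relative to $F$; and \eqref{eq: relative cohomology} cannot be transported by swapping the two sides, because $\Wbar$ is linear in $\pmb p$ but not in $\pmb x$: the affine-space fibration and the subvariety $X$ live over $\P(\wbold)$, and $F\to\P(\dbold)$ has Milnor-fibre-type fibres, with no analogue of $X$ inside $\P(\dbold)$. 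In the case $r\ge n$ the swapped formula is exactly the assertion of the lemma (so your argument is circular there), and in the case $r<n$ it is false in the one degree where you use it (see below). For the same reason, your claim in the case $r\ge n$ that $H^k(F)\cong H^k(\P(\dbold))$ for all $k$ with $\pi^*$ along $F\to\P(\dbold)$ an isomorphism misreads Lemma \ref{lemma: cohom Milnor fiber}: that lemma gives $H^k(F)\cong H^k(\P(\wbold)_\ga)$ (degrees $0,\dots,2n-2$), not $H^k(\P(\dbold))$ (degrees $0,\dots,2r-2$); e.g.\ in a sector with $n_\ga=0$, $r_\ga\ge1$ one has $F_\ga=\varnothing$ while $\P(\dbold)_\ga\neq\varnothing$. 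Likewise, in the case $r<n$, the assertion that $H^k(\P(\dbold))\to H^k(F)$ is an isomorphism for $k\le 2r-2$ ``by the affine fibration over $\P(\dbold)\setminus X$'' is unfounded, since $X$ is not a subvariety of $\P(\dbold)$.

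The degree bookkeeping in the case $r<n$ is also off by one, and the error is hidden precisely by the invalid Thom step. In the long exact sequence of the pair, $H^{n+r-2}(\Ocal_{\dbold}(-\wbold),F)$ sits between $H^{n+r-3}(F)$ and $H^{n+r-2}(\Ocal_{\dbold}(-\wbold))=0$, so it is a quotient of $H^{n+r-3}(F)$ and vanishes (when $n=r+1$ one needs in addition surjectivity of $H^{2r-2}(\Ocal_{\dbold}(-\wbold))\to H^{2r-2}(F)$); it is \emph{not} isomorphic to $H^{n+r-2}(F)$ as you conclude. The primitive class $H^{n+r-2}(F)\cong H^{n-r-1}_{\rm pr}(X)(-r)$ is instead carried isomorphically by the connecting homomorphism into $H^{n+r-1}(\Ocal_{\dbold}(-\wbold),F)$, because $H^{n+r-2}(\P(\dbold))=H^{n+r-1}(\P(\dbold))=0$; you kill exactly this target with the unjustified formula $H^{n+r-1}(\Ocal_{\dbold}(-\wbold),F)\cong H^{r-n-1}(\P(\dbold))(-n)=0$. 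So the nonzero relative group sits in degree $n+r-1$: this is what the paper's own computation obtains, what your own weight remark (``weight $n+r-1$, correct for a degree-$(n+r-1)$ class'') indicates, and what the quintic sector check $H^k([\C^5/\pmmu_5],F)\neq0$ only for $k=5$ confirms -- the ``$n+r-2$'' in the displayed statement is an off-by-one slip, which your argument reproduces only through the mistaken reading of the exact sequence. The paper's proof avoids all of this by using the triple $(\Ocal_{\dbold}(-\wbold),\Ocal_{\dbold}(-\wbold)^{\times},F)$: the genuine Thom isomorphism $H^k(\Ocal_{\dbold}(-\wbold),\Ocal_{\dbold}(-\wbold)^{\times})\cong H^{k-2n}(\P(\dbold))$, the identification $\Ocal_{\dbold}(-\wbold)^{\times}\cong\Ocal_{\wbold}(-\dbold)^{\times}$ to import $H^k(\Ocal_{\dbold}(-\wbold)^{\times},F)\cong H^{k-2r+1}(\P(\wbold),X)$ from the Calabi--Yau side, and an Euler-characteristic argument for the remaining degree $2r-1$; if you want to keep your pair-sequence approach you must supply these inputs rather than the swapped \eqref{eq: relative cohomology}.
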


\begin{proof}
Let $Z_1\subset\Ocal_{\wbold}(-\dbold)$ and $Z_2\subset\Ocal_{\dbold}(-\wbold)$ be the
zero sections. Write
$$
\Ocal_{\wbold}(-\dbold)^{\times} = \Ocal_{\wbold}(-\dbold)\setminus Z_1,\ \ 
\Ocal_{\dbold}(-\wbold)^{\times} = \Ocal_{\dbold}(-\wbold)\setminus Z_2.
$$

The vector bundles $\Ocal_{\dbold}(-\wbold)$ and
$\Ocal_{\wbold}(-\dbold)$ contain a common open subset
$$
[U_{CY}\cap U_{LG}/\C^\times]\cong\Ocal_{\wbold}(-\dbold)^{\times}\cong\Ocal_{\dbold}(-\wbold)^{\times}. 
$$
The inclusions
$F\subset\Ocal_{\dbold}(-\wbold)^{\times}\subset\Ocal_{\dbold}(-\wbold)$ give an inclusion
of pairs
$$
(\Ocal_{\dbold}(-\wbold)^{\times},F)\subset(\Ocal_{\dbold}(-\wbold),F)\subset(\Ocal_{\dbold}(-\wbold),\Ocal_{\dbold}(-\wbold)^{\times}).
$$
We shall calculate the relative cohomology
$H^k(\Ocal_{\dbold}(-\wbold),F)$ using the exact sequence of relative cohomology
\begin{equation}
\label{exseq}
H^k(\Ocal_{\dbold}(-\wbold),\Ocal_{\dbold}(-\wbold)^{\times})\xrightarrow{} 
H^k(\Ocal_{\dbold}(-\wbold),F)\xrightarrow{} H^k(\Ocal_{\dbold}(-\wbold)^{\times},F).
\end{equation}
We have
$$
H^k(\Ocal_{\dbold}(-\wbold),\Ocal_{\dbold}(-\wbold)^{\times})=H^k_{Z_2}(\Ocal_{\dbold}(-\wbold))\cong
H^{k-2n}(Z_2)\cong H^{k-2n}(\P(\dbold)).
$$
To calculate $H^k(\Ocal_{\dbold}(-\wbold)^{\times},F)$, we use the isomorphism $H^k(\Ocal_{\dbold}(-\wbold)^{\times},F)\cong H^k(\Ocal_{\wbold}(-\dbold)^{\times},F)$ and the previous results for $\Ocal_{\wbold}(-\dbold)$. Using the long exact sequence
$$
\ldots\to H^k(\Ocal_{\wbold}(-\dbold),\Ocal_{\wbold}(-\dbold)^{\times})\to H^k(\Ocal_{\wbold}(-\dbold),F)\to H^k(\Ocal_{\wbold}(-\dbold)^{\times},F)\to\ldots
$$
coming from the inclusion of pairs 
$$
(\Ocal_{\wbold}(-\dbold)^{\times},F)\subset(\Ocal_{\wbold}(-\dbold),F)\subset(\Ocal_{\wbold}(-\dbold),\Ocal_{\wbold}(-\dbold)^{\times}),
$$
the isomorphisms 
\begin{eqnarray*}
H^k(\Ocal_{\wbold}(-\dbold),\Ocal_{\wbold}(-\dbold)^{\times})& \cong & H^{k-2r}(\P(\wbold)) \\
H^k(\Ocal_{\wbold}(-\dbold),F) & \cong & H^{k-2r}(X)\ (see\ (\ref{eq: relative cohomology}))
\end{eqnarray*}
and the five lemma, we obtain
$$
H^k(\Ocal_{\wbold}(-\dbold)^\times,F)\cong H^{k-2r+1}(\P(\wbold),X).
$$

\medskip
We now turn to the calculation of $H^k(\Ocal_{\dbold}(-\wbold),F)$ in the case $r<n$. If $k\ge
2r$ the exact sequence 
$$
H^{k-1}(F)\to H^k(\Ocal_{\dbold}(-\wbold),F)\to H^k(\Ocal_{\dbold}(-\wbold)) 
$$
shows that
$$
H^k(\Ocal_{\dbold}(-\wbold),F)\cong H^{k-1}(F) = \left\{
\begin{array}{cc}
H^{n-r-1}_{\rm pr}(X) & {\rm\ if\ } k=n+r-1 \\
0 & {\rm\ otherwise.}
\end{array}
\right.
$$
If $k\le 2r-1$ the exact sequence (\ref{exseq})
shows that
$H^k(\Ocal_{\dbold}(-\wbold),F) = 0$ since the terms $H^{k-2n}(\P(\dbold))$
and
$$ H^k(\Ocal_{\dbold}(-\wbold)^{\times},F) =
H^k(\Ocal_{\wbold}(-\dbold)^{\times},F)\cong H^{k-2r+1}(\P(\wbold),X)
$$
vanish (also for $k=2r-1$, since $H^0(\P(\wbold))\cong H^0(X)$).

Hence in the case $r<n$ we find
$$
H^k(\Ocal_{\dbold}(-\wbold),F) = \left\{
\begin{array}{cc}
H^{n-r-1}_{\rm pr}(X) & {\rm\ if\ } k=n+r-1 \\
0 & {\rm\ otherwise.}
\end{array}
\right.
$$

\medskip
We now consider the case $r\ge n$. The calculations 
go through as before, with $X = \varnothing$.
Using the exact sequence of relative cohomology for the pair $(\Ocal_{\dbold}(-\wbold),F)$  we find that
$H^k(\Ocal_{\dbold}(-\wbold),F) = 0$ if $k\ge 2r$.

Next we consider the case $k\le 2r-2$. The exact sequence
\eqnrf{exseq} then shows that $H^k(\Ocal_{\dbold}(-\wbold),F)\cong
H^{k-2n}(\P(\dbold))$, hence
$$
H^k(\Ocal_{\dbold}(-\wbold),F) = \left\{
\begin{array}{cc}
\Q & {\rm\ if\ } 2n\le k\le 2r-2, k\ {\rm even} \\
0 & {\rm\ otherwise.}
\end{array}
\right.
$$
(Note that if $r=n$ the formula says that $H^k(\Ocal_{\dbold}(-\wbold),F) =
0$ for all $k\le 2r-2$.)

For the remaining case $k=2r-1$ we use an Euler characteristic
calculation. The exact sequence \eqnrf{exseq}
shows that
$$
\sum_i (-1)^i (h^i(\Ocal_{\dbold}(-\wbold),\Ocal_{\dbold}(-\wbold)^{\times}) - h^i(\Ocal_{\dbold}(-\wbold),F)\\ + h^i(\Ocal_{\dbold}(-\wbold)^{\times},F)) =
0.
$$
By the previous calculations we have
\begin{eqnarray*}
\sum_i (-1)^i h^i(\Ocal_{\dbold}(-\wbold),\Ocal_{\dbold}(-\wbold)^{\times}) & = & \sum_i
(-1)^i h^{i-2n}(\P(\dbold)) = r \\
\sum_i (-1)^i h^i(\Ocal_{\dbold}(-\wbold),F) & = &
r-n-h^{2r-1}(\Ocal_{\dbold}(-\wbold),F) \\
\sum_i h^i(\Ocal_{\dbold}(-\wbold)^{\times},F) & = & \sum_i (-1)^i
h^{i-2r+1}(\P(\wbold)) = -n.
\end{eqnarray*}
Hence we find that $H^{2r-1}(\Ocal_{\dbold}(-\wbold),F) = 0$.

So in the case $r\ge n$ we obtain
$$
H^k(\Ocal_{\dbold}(-\wbold),F) = \left\{
\begin{array}{cc}
H^{k-2n}(\P(\dbold)) & {\rm\ if\ } 2n\le k\le 2r-2 \\
0 & {\rm\ otherwise.}
\end{array}
\right.
$$
\end{proof}

\begin{proof}[Proof of Theorem \ref{thm:FJRW}]
The claim follows immediately from the formula (\ref{eq:FJRW}) and Lemma \ref{lemma: relative cohom II}.
\end{proof}

\section{The correspondence}
\begin{thm}\label{thm}
Let $X_W = \{W_1=0,\ldots,W_r=0\}\subset\P(w_1,\ldots,w_n)$ be a
non-degenerate Calabi--Yau complete intersection. There exists an
explicit bidegree preseving isomorphism
$$
H^*_{\CR}([\Ocal_{\P(\dbold)}(-\wbold)/G])\cong 
H^*_{\CR}([\Ocal_{\P(\dbold)}(-\wbold)/G]).
$$
\end{thm}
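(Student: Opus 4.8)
The plan is to prove the bidegree‑preserving isomorphism
$$
H^*_{\CR}\big([\Ocal_{\wbold}(-\dbold)/G]\big)\ \cong\ H^*_{\CR}\big([\Ocal_{\dbold}(-\wbold)/G]\big)
$$
(as in the displayed formula of the introduction) by recognising it as a variation of GIT quotient and then making the resulting identification of Chen--Ruan cohomologies explicit. First I would note that the two total spaces are the two GIT quotients of \emph{one and the same} linear action: for the diagonal $\Gamma$‑action on $\C^{n+r}$ used throughout \S3--\S4 one has $\Ocal_{\wbold}(-\dbold)=[U_{\CY}/\Gamma]$ and $\Ocal_{\dbold}(-\wbold)=[U_{\LG}/\Gamma]$, the two semistable loci being obtained by deleting $\{\xbold=\pmb 0\}$, respectively $\{\pmb p=\pmb 0\}$. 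The Calabi--Yau hypothesis (iii) says precisely that the $\C^\times$‑weights $(w_1,\dots,w_n,-d_1,\dots,-d_r)$ on $\C^{n+r}$ sum to zero, i.e. that the birational map across the wall is crepant, so that the two quotients are $K$‑equivalent; this is the conceptual source of the isomorphism, but to obtain it explicitly I would argue by direct computation.

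Since $G=\Gamma/\Gamma^0$ acts compatibly on every object below and taking $G$‑invariants commutes with each step, it suffices to treat $\Gamma=\C^\times$ and produce a $G$‑equivariant isomorphism. For a root of unity $\la=e^{2\pi i\theta}$, $\theta\in[0,1)$, write $\ol\la=(\la^{w_1},\dots,\la^{w_n},\la^{-d_1},\dots,\la^{-d_r})$, let $\wbold_\la$ and $\dbold_\la$ be the sub‑multi‑indices of $\wbold$ and $\dbold$ formed by the weights of the coordinates fixed by $\ol\la$, of lengths $n_\la$ and $r_\la$, and let $a_\la$ be the age of $\ol\la$ on $\C^{n+r}$ --- the sum of the fractional parts of the numbers $w_1\theta,\dots,w_n\theta,-d_1\theta,\dots,-d_r\theta$. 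The $\ol\la$‑twisted sector of $\Ocal_{\wbold}(-\dbold)$ is a vector bundle of rank $r_\la$ over the weighted projective stack $\P(\wbold_\la)$ on the $\ol\la$‑fixed $\xbold$‑coordinates, hence deformation‑retracts onto $\P(\wbold_\la)$; symmetrically on the $\LG$ side. Therefore
$$
H^*_{\CR}\big(\Ocal_{\wbold}(-\dbold)\big)=\bigoplus_{\la:\,n_\la\ge1}H^*\big(\P(\wbold_\la);\Q\big)(-a_\la),
$$
and symmetrically $H^*_{\CR}(\Ocal_{\dbold}(-\wbold))=\bigoplus_{\la:\,r_\la\ge1}H^*(\P(\dbold_\la);\Q)(-a_\la)$, where each $H^*(\P(\wbold_\la);\Q)=\Q[H]/(H^{n_\la})$ is spanned by Hodge--Tate classes $H^k$, $0\le k<n_\la$. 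The crucial observation is that $a_\la$ is computed from the single linear $\C^\times$‑action on $\C^{n+r}$, hence coincides on the two sides; the only asymmetry is that $\ol\la$ contributes on the $\CY$ side iff $n_\la\ge1$, on the $\LG$ side iff $r_\la\ge1$, and then carries the cohomology of the weighted projective space on the complementary coordinates. As a first consistency check, $\sum_\la n_\la=\sum_j\#\{\la:\la^{w_j}=1\}=\sum_j w_j$ and likewise $\sum_\la r_\la=\sum_i d_i$, so the two total dimensions agree precisely by (iii).

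The heart of the proof is then an explicit, bidegree‑preserving bijection between the monomial bases $\{(\ol\la,k):n_\la\ge1,\ 0\le k<n_\la\}$ and $\{(\ol\la,l):r_\la\ge1,\ 0\le l<r_\la\}$, where $(\ol\la,k)$ lies in bidegree $(a_\la+k,a_\la+k)$. I would build it by a rounding procedure that trades cohomological degree for twisted‑sector data: the monomial $H^k$ in the sector $\ol\la$ is sent to the sector $\ol{\la\mu}$, for a suitable root of unity $\mu=\mu(\la,k)$ determined by $k$ and the weights so that the units of age released along the $\pmb p$‑directions are exactly absorbed along the $\xbold$‑directions. Hypothesis (iii) enters a second time here: it is what makes this move preserve the total degree $a_\la+k$ and land in a sector with the correct fixed‑coordinate set, so that the same recipe with $\wbold$ and $\dbold$ interchanged provides the inverse. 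Being built from the linear data, the bijection is $G$‑equivariant and descends to arbitrary $\Gamma$. This replaces, and greatly simplifies, the combinatorial argument of \cite{ChiR}.

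The main obstacle is precisely this combinatorial matching, where the difficulty is bookkeeping rather than depth: a single sector $\Q[H]/(H^{n_\la})$ on one side is distributed over several distinct sectors on the other, so the bijection must simultaneously be a bijection on the nose, respect the fixed‑coordinate stratification, and match Chen--Ruan bidegrees --- and this last requirement interlocks the arithmetic of the fractional parts of the $w_j\theta$ with that of the $d_i\theta$ in a way that collapses once (iii) is dropped. Everything else --- the GIT description, the reduction to $G=1$, the sector decomposition and bundle retractions, and, via Proposition \ref{prop:Tate} together with the long exact sequences of the pairs $([\Ocal_{\wbold}(-\dbold)/G],[F/G])$ and $([\Ocal_{\dbold}(-\wbold)/G],[F/G])$ and Lemma \ref{lemma: cohom Milnor fiber}, the deduction of the cohomological LG/CY correspondence --- is routine.
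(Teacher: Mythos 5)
Your setup agrees with the paper's: both sides decompose into twisted sectors indexed by $\Gamma$, each sector is (the invariant part of) the Tate-type cohomology of a weighted projective stack $\P(\wbold_\ga)$, resp.\ $\P(\dbold_\ga)$, shifted by an age computed from the single linear action on $\C^{n+r}$, and the Calabi--Yau condition (iii) makes the total counts $\sum_\ga n_\ga=\sum_j w_j$ and $\sum_\ga r_\ga=\sum_i d_i$ agree. But the statement asks for an \emph{explicit bidegree-preserving} isomorphism, and that is exactly the step you do not supply. Your ``rounding procedure'' --- sending $H^k$ in the sector $\ol\la$ to the sector $\ol{\la\mu}$ for ``a suitable root of unity $\mu(\la,k)$'' --- is never defined: you do not say how $\mu$ is determined, nor why the assignment is injective and surjective onto the LG generators, nor why $a_{\ol{\la\mu}}+l=a_{\ol\la}+k$ for the appropriate $l$. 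Declaring this ``bookkeeping rather than depth'' does not discharge it; it is the entire content of the theorem, and a sector-by-sector matching cannot work because a single CY sector must in general be distributed over several LG sectors. The paper resolves precisely this point by a global combinatorial device: it places all generators as ``black'' dots (at the elements of $\pmmu_{w_1}\sqcup\dots\sqcup\pmmu_{w_n}$, suitably shifted by $g$) and ``white'' dots (at $\pmmu_{d_1}\sqcup\dots\sqcup\pmmu_{d_r}$) on a circle, defines a piecewise-linear function $f$ increasing through black dots and decreasing through white dots, observes that (iii) makes $f$ close up into a continuous function on the circle, and concludes that each integer value of $f$ (which records the Chen--Ruan degree) is attained at equally many black and white dots; pairing each black dot with the next white dot at the same value gives the explicit isomorphism. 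Some argument of this kind, actually written down and verified to preserve bidegree, is indispensable.

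A secondary gap: your reduction ``it suffices to treat $\Gamma=\C^\times$ and produce a $G$-equivariant isomorphism'' does not cover the general case. The Chen--Ruan cohomology of $[\,\cdot\,/\Gamma]$ has sectors indexed by \emph{all} $\ga\in\Gamma$, not only by $\Gamma^0$, so taking $G$-invariants of the $\C^\times$-statement misses every sector attached to $\ga=g\ol\la$ with $g\notin\Gamma^0$. The paper instead runs its matching separately on each connected component $g\Gamma^0$ of $\Gamma$, with the ages $a_{g\ol\la}$ involving the phases $a_j$ of $g$; your construction would have to be carried out coset by coset in the same way, and saying the bijection ``descends to arbitrary $\Gamma$'' because it is ``built from the linear data'' does not address this.
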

As an immediate corollary we get the desired isomorphism.
\begin{cor}\label{cor}
There exists an explicit bidegree preserving isomorphism
$$
H^*_{\Gamma}(W_1,\ldots,W_r)\cong 
H^*_{\CR}([X_W/G]).
$$
\end{cor}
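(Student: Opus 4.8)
The plan is to deduce Theorem~\ref{thm} from an explicit, sector-by-sector identification of the two Chen--Ruan cohomologies. Both sides decompose as direct sums over $\ga\in\Gamma$ of $\Gamma$-invariant (equivalently, $G$-invariant) pieces, with an age shift; by the computations already carried out in Lemma~\ref{lemma: relative cohom II} together with Lemma~\ref{lemma: cohom Milnor fiber} and Proposition~\ref{prop:Tate}, each sector of $H^*_{\CR}([\Ocal_{\P(\dbold)}(-\wbold)/G])$ and each sector of $H^*_{\CR}([\Ocal_{\P(\wbold)}(-\dbold)/G])$ is expressed in terms of the same invariants: either a primitive-cohomology term $H^{n_\ga-r_\ga-1}_{\rm pr}(X_{W,\ga})(-r)$ when $r_\ga<n_\ga$, or a truncated-polynomial term $dt(-n_\ga)\otimes\CC[t]/(t^{r_\ga-n_\ga})$ when $r_\ga\ge n_\ga$ (this is exactly what Theorem~\ref{thm:FJRW} records on one side, and the analogue via Proposition~\ref{prop:Tate} and \eqref{eq: relative cohomology} gives the other). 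So first I would set up, for a fixed $\ga$, the two algebraic models $H_\ga$, check that their bidegrees agree after the respective Tate/age twists, and match them termwise.

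The genuinely substantive point is that the indexing data producing these sectors is the \emph{same} on both sides. On the $\Ocal_{\dbold}(-\wbold)$ side a sector is cut out by $\ga\in\Gamma$ acting on $\C^n\times\C^r$, and one suppresses the coordinates $x_j$ with $\ga_j\ne 1$ and the coordinates $p_i$ with $\chi_i(\ga)\ne 1$; on the $\Ocal_{\wbold}(-\dbold)$ side the roles of $\xbold$ and $\pmb p$ are formally unchanged because $\Wbar=\sum p_iW_i(\xbold)$ is the same function and $\Gamma$ acts by the same formula. Thus for each $\ga$ one gets the same fixed locus $\C^{n_\ga}_\ga\times\C^{r_\ga}$, the same restricted complete intersection $X_{W,\ga}$ (nondegenerate by the Remark after \S3 and by Lemma~\ref{lem:normalnotnormal}), the same $r_\ga$ and $n_\ga$, and the same generic fibre $F_\ga$. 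The age shifts differ: $\ga$ acts with age $a_\ga$ on $T\Ocal_{\dbold}(-\wbold)$ and with age $a_\ga'$ on $T\Ocal_{\wbold}(-\dbold)$, and these are related by the involution $\chi\leftrightarrow\chi^{-1}$ on the $p$-directions versus the normal directions, so that $a_\ga+a_\ga'$ equals a fixed constant plus the $\Gamma$-invariant Calabi--Yau combination $\sum w_j=\sum d_i$; the point is that after the Tate twists $(-a_\ga+r)$ on one side and $(-a_\ga'+n)$ on the other, both land in the \emph{same} absolute bidegree. I would make this bookkeeping explicit and then invoke the CY condition (iii) to see the shifts cancel correctly.

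Concretely the proof would go: (1) write both Chen--Ruan groups as $\bigoplus_\ga$ of $G$-invariants of the relative cohomology of $(\Ocal_{\dbold_\ga}(-\wbold_\ga),F_\ga)$ resp.\ $(\Ocal_{\wbold_\ga}(-\dbold_\ga),F_\ga)$, with the appropriate age twists; (2) fix $\ga$ and apply Lemma~\ref{lemma: relative cohom II} on the LG side and \eqref{eq: relative cohomology} (equivalently Proposition~\ref{prop:Tate} applied to the sector $X_{W,\ga}$) on the CY side, reducing both to $H_\ga$ as in Theorem~\ref{thm:FJRW}; (3) check that the isomorphism so obtained is $G$-equivariant — it is, because every map used (the fibration projections $\pi$, the Thom/Gysin isomorphisms, the identification of the common open set $\Ocal_{\wbold}(-\dbold)^\times\cong\Ocal_{\dbold}(-\wbold)^\times$) is induced by $G$-equivariant morphisms of spaces — so it descends to invariants; (4) verify the bidegree match by comparing the two Tate twists and using the Calabi--Yau relation (iii), together with $\age(\chi)+\age(\chi^{-1})=1$ for each nontrivial character, exactly as in the age computation in the proof of Proposition~\ref{prop:Tate}; (5) sum over $\ga$. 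Corollary~\ref{cor} is then immediate: combine this isomorphism with Definition~\ref{defn:states} (the left side is $\Hcal^*_\Gamma(W_1,\dots,W_r)$ up to the shift $(r)$) and with Proposition~\ref{prop:Tate} for $\Ocal_{\wbold}(-\dbold)$ (whose relative cohomology computes $H^*_{\CR}([X_W/G])$ up to the same shift), the shift $(r)$ being common to both and hence harmless.

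I expect the main obstacle to be step~(4): keeping the two age gradings straight across the two presentations and confirming that the \emph{ad hoc}-looking twist $(-a_\ga+r)$ on the hybrid side and $(-a_\ga'+n)$ on the Calabi--Yau side really do produce equal absolute bidegrees in every sector, including the degenerate sectors $r_\ga\ge n_\ga$ where the relevant term is $H^*(\P(\dbold_\ga))$ resp.\ $H^*(\P(\wbold_\ga))$ and one must see that $dt(-n_\ga)\otimes\CC[t]/(t^{r_\ga-n_\ga})$ carries the correct bigrading after both twists. This is a finite, purely combinatorial verification driven by (iii) and by $\age(\chi)+\age(\chi^{-1})=1$, but it is where an error would hide; everything else is an assembly of results already proved in \S3 and \S4.
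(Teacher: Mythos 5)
Your plan hinges on a sector-by-sector identification: for each fixed $\ga\in\Gamma$ you claim that both sides reduce to the \emph{same} model $H_\ga$ (primitive cohomology if $r_\ga<n_\ga$, a truncated polynomial algebra if $r_\ga\ge n_\ga$). This is false, and it is exactly where the substance of Corollary \ref{cor} lies. Theorem \ref{thm:FJRW} gives that description only for the LG pair $(\Ocal_{\dbold_\ga}(-\wbold_\ga),F_\ga)$; on the CY side, \eqref{eq: relative cohomology} gives $H^k(\Ocal_{\wbold}(-\dbold)_\ga,F_\ga)\cong H^{k-2r_\ga}(X_{W,\ga})(-r_\ga)$, i.e.\ the \emph{full} cohomology of $X_{W,\ga}$, not just its primitive part, and this vanishes when $r_\ga\ge n_\ga$ (then $X_{W,\ga}=\varnothing$) instead of producing $\CC[t]/(t^{r_\ga-n_\ga})$. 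Already for $\ga=1$ the CY sector contains the $n-r$ powers of the hyperplane class of $X_W$ while the LG sector is purely primitive; conversely, LG states in sectors with $r_\ga>n_\ga$ have no counterpart in the same CY sector. Hence no amount of bidegree bookkeeping \emph{within} a sector (your step (4)) can yield the isomorphism: the ambient classes of the CY side must be paired with states in \emph{different} twisted sectors on the LG side, which is the hallmark of the LG/CY correspondence. The ingredient your proposal omits is precisely the paper's Theorem \ref{thm}, proved by the global combinatorial argument (black/white dots on rays, the piecewise linear function $f$) that matches hyperplane-type generators across sectors degree by degree; the corollary then follows by combining Definition \ref{defn:states}, Proposition \ref{prop:Tate}, the sector-wise identity of the primitive parts (same group, same twist, same degree $n_\ga+r_\ga-1$ on both sides), and the fact that the remaining ``excess'' generators on the two sides form equal multisets of bidegrees because they are obtained from the two sides of Theorem \ref{thm} by deleting a common multiset (the generators with $0\le k\le\min(n_\ga,r_\ga)-1$ in each sector). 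Nothing in your proposal constructs, or even invokes, such a cross-sector matching.

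A secondary error: the age shifts on the two total spaces do \emph{not} differ. Both $[\Ocal_{\dbold}(-\wbold)/G]$ and $[\Ocal_{\wbold}(-\dbold)/G]$ are quotients of open subsets of $\C^{n+r}$ by the same $\Gamma$-action, and the orbit direction is $\ga$-fixed, so the age entering \eqref{eq:FJRW} and the age entering the CY pair are literally the same number $a_\ga$. The relation $\age(\chi)+\age(\chi^{-1})=1$ is used in Proposition \ref{prop:Tate} to compare $X_W$ with the bundle $\Ocal_{\wbold}(-\dbold)$ (the shift $r-r_\ga$), not to compare the two bundles with each other; and the Calabi--Yau condition (iii) enters not through the cancellation you describe but through the combinatorics of Theorem \ref{thm} (it makes the number of black dots equal the number of white dots, so that $f$ closes up on the circle). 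So the difficulty you flag as the main obstacle is in fact harmless, while the actual obstacle --- the global, cross-sector matching --- is absent from the proposal.
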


\begin{proof}[Proof of Theorem \ref{thm}]
Let us 
refer to the left hand side as the CY side, and to the right hand side as the LG side. 
By definition, both sides decompose as the direct sum over $\Gamma$
$${\rm CY}=\bigoplus_{\substack{{\ga \in \Gamma}}}\bigoplus_{0\le k<n_\ga} [{\rm CY}^k_\ga] \Q \qquad \text{and} \qquad 
{\rm LG}=\bigoplus_{\substack{{\ga \in \Gamma}} }
\bigoplus_{0\le k<r_\ga} [{\rm LG}^k_\ga] \Q,$$
where all terms $\CY_\ga^k$ and $\LG_\ga^k$ 
are elements of the form 
$H^k\cap \pmb 1_\ga$, where $H$ is an hyperplane section
and $\pmb 1_\ga$ is the fundamental class of the corresponding sector. The 
bidegree $(p,p)$ of such elements  
is $p=k-1+a_\ga$ by definition.

We shall identify the $\CY$ and the $\LG$ side for every connected component of $\Gamma$.
Each component is of the form $g\ol \lambda$ for a fixed diagonal symmetry 
of $W_1,\dots,W_n$. 
We express $g$ as the diagonal matrix whose $n+r$ entries 
on the diagonal are $(\exp(2\pi i a_1),\dots, 
\exp(2\pi i a_n),{\bf 1})$ with $a_i\in [0,1[$.
We write $\lambda=\exp(-2\pi i t)$.

Let $\ga=g\ol \lambda$ with $g$ and $\la$ as above. We denote the fractional part of $x$ by $$\langle x \rangle = x - \lfloor x \rfloor.$$
By definition we have
\begin{align*}a_{g\ol \la}&=\sum_{j=1}^n \langle -tw_j+a_j\rangle
+\sum_{j=1}^r \langle td_i\rangle\\
&= \sum a_j
-\sum_{i=1}^r \lfloor d_it \rfloor
-\sum_{j=1}^n \lfloor a_j- w_jt\rfloor\\
&= \sum a_j
-\sum_{i=1}^r \lfloor d_it \rfloor
+n+\sum_{j=1}^n 
\begin{cases}\lfloor w_jt-a_j\rfloor & w_jt-a_j\not\in \Z\\
\lfloor w_jt-a_j\rfloor -1  & w_jt-a_j\in \Z.
\end{cases}
\end{align*}
The generators of 
$\rm CY$ and $\rm LG$ arising from $\Gamma$ 
can be easily pictured in the following diagram
representing on concentric circles the 
angular coordinates of the elements of $\pmmu_{w_1}\sqcup \dots \sqcup \pmmu_{w_N}$
(black dots) and of $\pmmu_{d_1}\sqcup
\dots \sqcup\pmmu_{d_r}$ (white dots).

\begin{figure}[h]\label{fig:diag1}
\begin{tikzpicture}%
\useasboundingbox (-5.,-5.)--(5.,5.);
\pgfsetroundjoin%
\pgfsetstrokecolor{rgb,1:red,0.7529;green,0.7529;blue,0.7529}
\pgfsetlinewidth{0.2pt}
\pgfsetdash{{5pt}{3pt}}{0pt}
\pgfellipse[stroke]{\pgfxy(0,0)}{\pgfxy(1,0)}{\pgfxy(0,1)}
\pgfellipse[stroke]{\pgfxy(0,0)}{\pgfxy(2,0)}{\pgfxy(0,2)}
\pgfellipse[stroke]{\pgfxy(0,0)}{\pgfxy(3,0)}{\pgfxy(0,3)}
\pgfellipse[stroke]{\pgfxy(0,0)}{\pgfxy(4,0)}{\pgfxy(0,4)}
\pgfellipse[stroke]{\pgfxy(0,0)}{\pgfxy(5,0)}{\pgfxy(0,5)}
\pgfsetstrokecolor{black}
\pgfellipse{\pgfxy(0,0)}{\pgfxy(2.5,0)}{\pgfxy(0,2.5)}
\pgfsetdash{}{0pt}
\pgfxyline(0,0)(5,0)
\pgfxyline(0,0)(-5,0)
\pgfxyline(0,0)(0,5)
\pgfxyline(0,0)(0,-5)
\pgfxyline(0,0)(-2.8868,-5)
\pgfxyline(0,0)(-2.8868,5)
\pgfsetfillcolor{black}
\pgfcircle[stroke]{\pgfxy(1,0)}{.1cm}
\pgfputat{\pgfxy(0.8232,-0.09)}{\pgftext[right,top]{\color{black}\small 1}}\pgfstroke
\pgfcircle[stroke]{\pgfxy(2,0)}{.1cm}
\pgfputat{\pgfxy(1.8232,-0.09)}{\pgftext[right,top]{\color{black}\small 0}}\pgfstroke
\pgfcircle[fillstroke]{\pgfxy(3,0)}{.1cm}
\pgfputat{\pgfxy(2.8232,-0.09)}{\pgftext[right,top]{\color{black}\small 0}}\pgfstroke
\pgfcircle[fillstroke]{\pgfxy(4,0)}{.1cm}
\pgfputat{\pgfxy(3.8232,-0.09)}{\pgftext[right,top]{\color{black}\small 1}}\pgfstroke
\pgfcircle[fillstroke]{\pgfxy(5,0)}{.1cm}
\pgfputat{\pgfxy(4.8232,-0.09)}{\pgftext[right,top]{\color{black}\small 2}}\pgfstroke
\pgfcircle[stroke]{\pgfxy(0,1)}{.1cm}
\pgfputat{\pgfxy(-0.1768,0.8232)}{\pgftext[right,top]{\color{black}\small 2}}\pgfstroke
\pgfcircle[stroke]{\pgfxy(-1,-0)}{.1cm}
\pgfputat{\pgfxy(-1.1768,-0.1768)}{\pgftext[right,top]{\color{black}\small 2}}\pgfstroke
\pgfcircle[stroke]{\pgfxy(-2,-0)}{.1cm}
\pgfputat{\pgfxy(-2.1768,-0.1768)}{\pgftext[right,top]{\color{black}\small 1}}\pgfstroke
\pgfcircle[fillstroke]{\pgfxy(-1.5,-2.5981)}{.1cm}
\pgfputat{\pgfxy(-1.3232,-2.7749)}{\pgftext[left,top]{\color{black}\small 2}}\pgfstroke
\pgfcircle[fillstroke]{\pgfxy(-4,-0)}{.1cm}
\pgfputat{\pgfxy(-4.1768,-0.1768)}{\pgftext[right,top]{\color{black}\small 1}}\pgfstroke
\pgfcircle[stroke]{\pgfxy(-0,-1)}{.1cm}
\pgfputat{\pgfxy(-0.1768,-1.1768)}{\pgftext[right,top]{\color{black}\small 2}}\pgfstroke
\pgfcircle[fillstroke]{\pgfxy(-1.5,2.5981)}{.1cm}
\pgfputat{\pgfxy(-1.6768,2.4213)}{\pgftext[right,top]{\color{black}\small 2}}\pgfstroke
\end{tikzpicture}%
\caption{Diagram of $\{x_1^2+x_2=0,x_1^4+x_2^2+x_3x_1=0\}\subseteq\P(1,2,3);$
$(w_1,w_2,w_3,d_1,d_2)=(1,2,3,-2,-4)$.\label{fig:diag2}}
\end{figure}
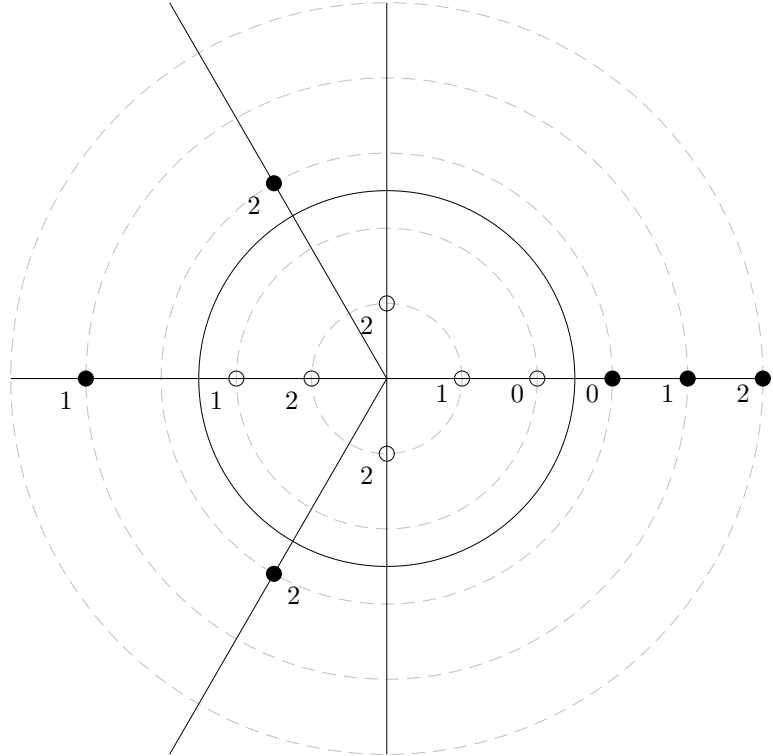 

Let us illustrate how the white and black dots 
correspond to generators of $\rm LG$ and  $\rm CY$, respectively.
Notice that, by construction, a ray $\{\rho\la\mid \rho\in \R^+\}$ 
carries $r_\la$ white dots and $n_\la$ black dots.
The white dots correspond  to the states 
$\LG_\ga^0,\dots,\LG_\ga^{r_\ga-1}$ whereas the black dots correspond to the
states $\CY_\ga^0,\dots,\CY_\ga^{n_\ga-1}$.
The total number of black dots equals the total number of white
dots, \emph{i.e.} $D=\sum_j w_j=\sum_id_i$. 
We can order all the dots in lexicographic order starting from 
$\CY_0$; we get $x_1,x_2,\dots, x_{2D}$. 
Then, by the above formulae, 
the cohomological Chen--Ruan 
degree of a state corresponding to 
$x_i$ is $\sum_j a_j$ plus the following function
$$f(x_i)=
\#\{y \text{ black } \mid x_1\le y<x_i \}-
\#\{y \text{ white } \mid x_1\le y\le x_i \}.$$
We can identify 
each term $x_i$ with the integer $i$ and
extend this degree function to a piecewise linear 
real function defined on 
$]\frac12,2D+\frac12]$
\begin{align*}
f(x)=
\begin{cases} f(x_i)+(x-i) & 
{\rm\ if\ } x \text{ black and\ } x\in]x_i-\frac12,x_i+\frac12],\\
f(x_i)-(x-i) & 
{\rm\ if\ } x \text{ white and\ } x\in]x_i-\frac12,x_i+\frac12].
\end{cases}
\end{align*}
This function is continuous and its values at the boundary of the interval 
coincide, so it can be regarded as a continuous 
function on a circle.
It is strictly increasing at all black dots and strictly decreasing 
at all white dots; its relative maxima occur for 
values of the form $\frac12(x_i+x_{i+1})$ where $x_i$ is black 
and $x_{i+1}$ is white.
Similarly, with inverted colours, for the relative minima.
In this way if $f$ 
attains a given (integer) value at a given number of black dots
it must attain the same value at the same number of
white dots. 
(We illustrate this in Fig.~\ref{fig:diag2} by writing the value of 
$f$ next to each dot). This yields the desired bidegree preserving LG/CY isomorphism.

We can use the function $f$ to 
define an explicit isomorphism, simply by pairing each 
black dot for which $f=k$ to the following white dot 
for which $f=k$. This yields an explicit isomorphism.
\end{proof}

\end{document}